\declaretheorem{theorem}
\theoremstyle{remark}
\newtheorem{lemma}{\bf Lemma}
\theoremstyle{plain}
\definecolor{mycolor1}{rgb}{0.97255,0.97255,0.97255}%
\newcommand{\distas}[1]{\mathbin{\overset{#1}{\kern\z@\sim}}}%
\def \P {{\mathbf P}}
\def \U {{\mathcal U}}
\def \x {{\mathbf x}}
\def \g {{\mathbf g}}
\def \y {{\mathbf y}}
\def \h {{\mathbf h}}
\def \p {{\mathbf p}}
\def \O {\mathcal{O}}
\def \E {{\mathbb E}}
\def \F {{\mathcal{F}}}
\def \L {{\boldsymbol{\Lambda}}}
\def \X {{\mathbf X}}
\def \Rn {{\mathbb R}}
\def \sk {{\sum\limits_{t=1}^{T}}}
\def \sk {{\sum\limits_{i=1}^{K}}}
\def \stau {{\frac{1}{T}\sum\limits_{t=1}^{T}}}
\def \ep {{\epsilon}}
\def \lams {{\boldsymbol{\lam}^{\star}}}
\def \EE {{\mathbb{E}}}
\def \lam {{\lambda}}
\def \lamb {{\boldsymbol{\lam}_{t}^{i-1}}}
\def \lambm {{\boldsymbol{\lam}_{t}^{i}}}
\def \lb {{\boldsymbol{\lam}}}
\DeclareMathOperator*{\argmin}{arg\,min}
\DeclareMathOperator*{\argmax}{arg\,max}
\def \Xc {{\mathcal{X}}}
\def \Pc {{\mathcal{P}}}
\def \lam {{\bm{\lambda}}}
\def \y {{\boldsymbol{\gamma}}}
\def \w {{\mathbf{w}}}
\def \x {{\mathbf{x}}}
\def \h {{\mathbf{h}}}
\def \Xc {{\mathcal{X}}}
\def \Pc {{\mathcal{P}}}
\def \lam {{\bm{\lambda}}}
\def \y {{\mathbf{y}}}
\def \u {{\mathbf{u}}}
\def \v {{\mathbf{v}}}
\def \w {{\mathbf{w}}}
\def \e {{\mathbf{e}}}
\def \bg {{\mathbf{h}}}
\def \Nn {{\mathbb{N}}}
\providecommand{\Ex}[1]{\mathbb{E}\left[#1\right]}
\providecommand{\px}[1]{P_{\L}\left[#1\right]}
\newcommand{\col}[1]{\textcolor{black}{#1}}
\newcommand{\colr}[1]{\textcolor{black}{#1}}
\providecommand{\abs}[1]{\left|#1\right|}
\providecommand{\norm}[1]{\left\|#1\right\|}
\providecommand{\ip}[1]{\langle#1\rangle}
\begin{document}
	%
	\title{\vspace{-0.5cm}Asynchronous Incremental Stochastic  Dual Descent Algorithm for Network Resource Allocation}
\author{\IEEEauthorblockN{Amrit Singh Bedi, \emph{Student Member, IEEE} and Ketan Rajawat, \emph{Member, IEEE} \thanks{The	authors are with the Department of Electrical Engineering, IIT Kanpur, Kanpur
			(UP), India 208016 (email: {amritbd, ketan}@iitk.ac.in).}}\vspace{-1cm}}

	\maketitle
	
	\begin{abstract}
Stochastic network optimization problems entail finding resource allocation policies that are optimum on an average but must be designed in an online fashion. Such problems are ubiquitous in communication networks, where resources such as energy and bandwidth are divided among nodes to satisfy certain long-term objectives. This paper proposes an asynchronous incremental dual decent resource allocation algorithm that utilizes delayed stochastic {gradients} for carrying out its updates. The proposed algorithm is well-suited to heterogeneous networks as it allows the computationally-challenged or energy-starved nodes to, at times,  postpone the updates. The asymptotic analysis of the proposed algorithm is carried out, establishing dual convergence under both, constant and diminishing step sizes. It is also shown that with constant step size, the proposed resource allocation policy is asymptotically near-optimal. An application involving multi-cell coordinated beamforming is detailed, demonstrating the usefulness of the proposed algorithm. 
	\end{abstract}
	
	\begin{IEEEkeywords}
		Stochastic {subgradient}, resource allocation, asynchronous algorithm, incremental algorithm. \vspace{0mm} 
	\end{IEEEkeywords}

\section{Introduction}
The recent years have witnessed an unprecedented growth in the complexity and bandwidth requirements of network services. The resulting stress on the network infrastructure has motivated the network designers to move away from simpler or modular architectures and towards optimum ones. To make sure that resources such as bandwidth and energy are allocated efficiently, optimum designs advocate cooperation between the network nodes \cite{hong2007cooperative,sadek2007multinode}. This paper considers the problem of cooperative network resource allocation that arises in wireless communication networks \cite{georgiadis2006resource}, smart grid systems \cite{alloc_smart_tut_1}, and in the context of scheduling \cite{jaramillo2010optimal}. Of particular interest is the stochastic resource allocation problem, where the goal is to find an allocation policy that is asymptotically optimal \cite{stoch_res_prob_1,stoch_res_prob_3}. Although such problems are infinite dimensional in nature, they can be solved in an online fashion via stochastic dual descent methods, allowing real-time resource allocation that is also asymptotically near-optimal \cite{ribeiro2010separation,ale10,nedic2009approximate,yu2006dual,luo2008dynamic}. 

Heterogeneous networks are common to a number of applications where the energy availability, computational capability, or the mode of operation of the nodes is not the same across the network. Key requirements for heterogeneous network protocols include scalability, robustness, and tolerance to delays and packet losses. Towards this end, a number of distributed algorithms have been proposed in the literature \cite{veeravalli,nedic2001distributed,chang2015asynchronous,zhang2014asynchronous,wei20131,srivastava2011distributed, nedic2011asynchronous, ram2009asynchronous}. By eliminating the need for a fusion center, the distributed algorithms operate with reduced communication overhead, and render the network resilient to single-point failures. 

Most distributed algorithms still place stringent communication and computational requirements on the network nodes. For instance, the dual stochastic {gradient} methods entail multiple updates and message exchanges per time slot, and cannot handle missed or delayed updates. In heterogeneous networks, such delays are often unavoidable, arising due to poor channel conditions, traffic congestion, or limited processing power at certain nodes. This paper proposes a distributed asynchronous stochastic resource allocation algorithm that tolerates such delays. The next subsection outlines the main contributions of this paper.\vspace{0cm} 
\vspace{-3mm}
\subsection{Contributions and organization}
The stochastic resource allocation problem is formulated as a constrained optimization problem where the goal is to maximize a network-wide utility function. The allocated resources at the different nodes in the network are coupled through constraint functions that involve expectations with respect to a random network state. Specifically, the aim is to find an allocation policy that satisfies the constraints on an average. The distribution of the state variables is not known, so that the optimization problem does not admit an offline solution. Instead, the idea is to observe the instances of the state variables over time, and allocate resources in an online manner. It is well-known that stochastic dual descent algorithms yield viable online algorithms for such problems \cite{ale10}. 

Within the heterogeneous network setting considered here, the focus is on distributed algorithms that can tolerate communication and processing delays {\cite{rajawat2011cross,chang2015asynchronous,agarwal2011distributed}}. Different from the state-of-the-art algorithms that utilize the standard stochastic { gradient} methods {\cite{ribeiro2010separation,agarwal2011distributed,gatsis2010class}}, we develop two variants of the asynchronous dual descent algorithm that allow some of the nodes in the network to temporarily ``fall back,'' in the event of low energy availability, unusually large processing delay, node shutdown, or channel impairments. The first asynchronous variant utilizes a fusion center to collect the possibly delayed { gradient}s from various nodes and carry out the updates {(cf. \ref{sec-asy} )}. The second variant eliminates the need for the fusion center, and instead utilizes the fully distributed and incremental stochastic { gradient} descent algorithm, where the nodes carry out updates in a round-robin fashion and pass messages along a cycle {(cf. \ref{sec-inc} )}. As earlier, the use of stale { gradient}s for primal and dual updates, allows the algorithm to be run on two different clocks, one corresponding to the local resource allocation and tuned to the changing random network state, while the other dictated by the message passing protocol. The key feature of the proposed algorithm is the possibility for the second clock to slow down temporarily and wait for slower nodes to catch up. The proposed algorithm thus allows timely resource allocation, while tolerating occasional delays in message passing.

The asymptotic performance of the proposed algorithm is studied under certain regularity conditions on the problem structure and bounded delays. In particular, the asymptotic performance of the asynchronous incremental stochastic { subgradient} descent {(AIS-SD)} algorithm is characterized under both, diminishing and constant step-sizes. The overall structure of the proof is based on the convergence results in the incremental stochastic { subgradient} descent algorithm of \cite{veeravalli}  and the asynchronous incremental {{subgradient}} method of \cite{nedic2001distributed}. Specific to the resource allocation problem at hand, the asymptotic near-optimality and almost sure feasibility of the primal allocation policy is established for the case of constant step sizes. {When applied to resource allocation problems, the proposed algorithm is called asynchronous incremental stochastic dual descent (AIS-DD)}. It is remarked that since the proposed algorithms utilize stochastic { subgradient} descent, their computational complexity is also comparable to other distributed stochastic algorithms \cite{veeravalli,agarwal2011distributed,duchi2015asynchronous,chang2015asynchronous,zhang2014asynchronous,wei20131,srivastava2011distributed, nedic2011asynchronous, ram2009asynchronous,sirb2016decentralized}. The calculation of the { subgradient} is the most computationally expensive step, and like other first-order algorithms, must be carried out at every time slot.

Finally, the stochastic coordinated multi-cell beamforming problem is formulated and solved via the proposed algorithm. Detailed simulations are carried out to demonstrate the usefulness of the proposed algorithm in delay-prone and distributed environments. Summarizing, the main contributions of the paper include (a) the {AIS-SD} algorithm and its convergence (b) primal near-optimality and feasibility results for the allocated resources using {AIS-DD}; and (c) demonstration of the proposed algorithm on a practical stochastic coordinated multi-cell beamforming problem. 
	 
	The rest of the paper is organized as follows. Sec. \ref{rel} provides an outline of the related literature. Sec. \ref{probfor} describes the problem formulation and recapitulates the known results. Sec. \ref{prop} details  the proposed algorithm. Sec. \ref{sec-conv} lists the required assumptions, and provides the primal and dual convergence results. Sec. \ref{beam} formulates the stochastic version of the coordinated beamforming problem along with the relevant simulation results. Finally, Sec. \ref{conc} concludes the paper. \vspace{-3mm}
	
\subsection{Related work} \label{rel}
Resource allocation problems have been well-studied in the context of cross-layer optimization in networks \cite{lin2006tutorial}. Popular tools for solving stochastic resource allocation problems include the backpressure algorithm \cite{georgiadis2006resource} and variants of the stochastic dual descent method \cite{gatsis2010class,ale10}. However, most of these works only consider synchronous algorithms, and the effect of communication delays has not been examined in detail. An exception is the asynchronous {subgradient} method proposed in \cite{rajawat2011cross}, where delayed {subgradient}s were utilized for resource allocation. The present work extends the algorithm in \cite{rajawat2011cross} by allowing delayed stochastic {subgradient}s. Additionally, the proposed algorithm is also incremental, and is therefore applicable to a wider variety of problems. 

Depending on the mode of communication among the nodes, distributed algorithms can be broadly classified into three categories, namely, diffusion, consensus, and incremental \cite{lopes2007incremental}. Of these, the incremental update rule generally incurs the least amount of message passing overhead \cite{rabbat2005quantized}, and is of interest in the present context. The incremental {subgradient} descent and its variants have been widely applied to large-scale problems, and generally exhibit faster convergence than the traditional steepest descent algorithm and its variants \cite{nedic2001incremental}.

The stochastic {gradient} and {subgradient} algorithms are well-known within the machine learning and signal processing communities \cite{bottou2010large,bach2011non,agarwal2011distributed}. The incremental stochastic {subgradient} method, with cyclic, random, and Markov incremental variants, was first proposed in \cite{veeravalli}. The asymptotic analysis of dual problem in the present work follows the same general outline as that of the cyclic incremental algorithm in \cite{veeravalli}, with additional modifications introduced to handle asynchrony. It is emphasized that these modifications are not straightforward, since the delayed stochastic {subgradient} is not generally a descent direction on an average. The present work also allows delays in both, primal and dual update steps, and establishes asymptotic near-optimality and feasibility of the primal allocation policies. Finally, saddle point algorithms have recently been applied to unconstrained \cite{rev_ref2} or proximity-constrained \cite{rev_ref3} network optimization problems, but do not readily generalize to the general form constrained optimization problem considered here.

Asynchronous algorithms have also been considered within the Markov decision process framework \cite{neely2012asynchronous}, though the setup there is quite different and does not apply to the problem at hand. On the other hand, asynchronous first order methods have attracted a significant interest from the machine learning community {\cite{agarwal2011distributed,duchi2015asynchronous}}. For problems where the exact {subgradient} is available at each node, the asynchronous alternating directions method of multiplier (ADMM) has been well-studied \cite{chang2015asynchronous,zhang2014asynchronous,wei20131}. The present work considers stochastic algorithms, and thus differs considerably in terms of both analysis and the final results. Even among algorithms utilizing stochastic {subgradient}s, the definition of asynchrony varies across different works. One way to model asynchrony is to allow each node to carry out its update according to a local Poisson clock. This approach is followed in \cite{srivastava2011distributed, nedic2011asynchronous, ram2009asynchronous}, all of which consider various consensus-based distributed {subgradient} algorithms. The asynchronous adaptive algorithms in \cite{sayed2015asynchronous} also subscribe to the same philosophy, with decoupled node-updates due to communication errors, changing topology, and node failures. The incremental algorithm considered here is very different in terms of operation and analysis.

On the other hand, asynchronous operation can be modeled via delayed {gradient}s or {subgradient}s utilized for the updates. A  consensus-based stochastic algorithm proposed in \cite{sirb2016decentralized}, and utilizes randomly delayed stochastic {gradient}s. Along similar lines, asynchronous saddle point algorithms for network problems with edge-based constraints have recently been proposed \cite{bedi2017beyond1,bedi2017beyond2}. \colr{Finally, for the unconstrained variants of the problem, a non-parametric approach has been proposed in \cite{koppel2016parsimonious}.} Different from these works, the network resource allocation framework considered here allows generic convex constraints. Further, the incremental algorithm developed here handles stale {subgradient}s while incurring significantly lower communication overheads. Asynchronous variants of the classical or averaged stochastic {gradient} methods have been proposed in \cite{adadelay, agarwal2011distributed, feyzmahdavian2015asynchronous,liu2015asynchronous}. The generic problem of interest here is that of the minimization of a sum of private functions at various nodes. Further, a network with star topology is considered, with updates being carried out using delayed {{ gradient}}s collected at the fusion center. Different from these works, the proposed algorithm is incremental, does not require a fusion center, and is therefore more relevant to the network resource allocation problem at hand. Unlike these works, the present work also avoids making any assumptions on the compactness of the domain of the dual optimization problem. Before concluding, it is remarked that this work develops convergence results that hold on an average. Stronger results, where convergence is established in an almost sure sense, require a more involved analysis, and are not pursued here. 


The notation used in this paper is as follows. Scalars are represented by small letters, vectors by small boldface letters, and constants by capital letters. The index $t$ is used for the time or iteration index. The inner product between vectors $\boldsymbol{a}$ and $\boldsymbol{b}$ is denoted by $\ip{\boldsymbol{a},\boldsymbol{b}}$. For a vector $\x$, projection onto the non-negative orthant is denoted by $[\x]^+$. The expectation operation is denoted by $\EE$. \colr{The notation $\nabla$ is used for gradient and $\partial$ is used for the subgradient.} \colr{The Euclidean norm is denoted by $\norm{\cdot}$.}
\vspace{-5mm}		
\section{Problem formulation}\label{probfor}

\subsection{Problem statement}\label{key}
This section details the stochastic resource allocation problem at hand for a network with $K$ nodes. The stochastic component of the problem is captured through the random network state, comprising of the random vectors $\bg^i \in \Rn^{q}$ for each node $i \in \{1, \ldots, K\}$, with unknown distributions. The overall problem is formulated as follows. \vspace{0mm}
	\begin{subequations}\label{P1}
		\begin{align}\label{p1}
	\textsf{P}\ := \ \max &\ \  \sk f^i(\x^i) \\
	\text{s.t. } \ \  &\sk \u^i(\x^i) + \EE\left[\v^i(\bg^i,\p^i_{\bg^i})\right]\succeq 0\label{const_1}\\
	& \x^i\in\Xc^i, \ \p^i\in\Pc^i\label{const_2}
	\end{align}
	\end{subequations}
\colr{where the expectation in \eqref{const_1} is with respect to the random vector $\h^i$ and $\mathsf{P}$ is finite.} The optimization variables in \eqref{P1} include the resource allocation variables $\{\x^i\!\!\in\!\!\mathbb{R}^n\}_{i=1}^K$ and the policy functions $\{\p^i\!\!:\!\!\Rn^{q} \rightarrow \Rn^{p}\}_{i=1}^K$, under the constraints \eqref{const_1}-\eqref{const_2}.  Note that, the constraints in \eqref{const_1} are required to be satisfied on an average, whereas those in \eqref{const_2} are needed to be satisfied instantaneously. The functions $f^i:\Rn^n \rightarrow \Rn$ are assumed to be concave, and the sets $\Xc^i \subseteq \Rn^n$, convex and compact. The constraint function at node $i$ is vector-valued, and is given by $\u^i(\x^i):=[u^i_1(\x^i) \cdots u^i_d(\x^i)]^T$, where $\{u_k^i(\x^i):\Rn^n \rightarrow \Rn\}_{k=1}^d$ are concave functions. On the other hand, no such restriction is imposed upon the vector-valued function $\v^i:\Rn^{p} \times \Rn^{q} \rightarrow \Rn^d$ and the {compact} set of functions $\{\mathcal{P}^i\}_{i=1}^K$. Of course, the overall problem still needs to adhere to certain regularity conditions (see Sec. \ref{sec-conv}), such as the Slater's constraint qualification and Lipschitz continuity of the {{ gradient}} function; see (\textbf{A1})-(\textbf{A7}). 
	
Since the distribution of $\bg^i$ is also not known in advance, it is generally not possible to solve for $\textsf{P}$ in an offline manner. Therefore, an online algorithm is sought to solve problem `on the fly' as the independent identically distributed (i.i.d.) random variables $\{\bg_t^i\}_{t\in \Nn}$ are realized and observed. For brevity, we denote $\p_t^i:= \p_{\bg^i_t}^i$ and $\g_t^i(\p_t^i,\x^i):=\u^i(\x^i) + \v^i(\bg_t^i,\p^i_{\bg^i_t})$. Therefore, it is possible to write \eqref{const_1} equivalently as $\EE\left[\sk \g_t^i(\p_t^i,\x^i)\right]\succeq 0$. The algorithm outputs a sequence of vector pairs $\{\x_t^i, {\p}_t^i\}_{t}$, that are used for allocating resources in a timely manner. Towards this end, the stochastic dual descent algorithm has been proposed in \cite{ale10}, which yields allocations that are almost surely near-optimal and provably convergent.

	
In the present paper, the focus is on networked systems where both, allocations {$(\x^i,\p^i)$} and the functions $f^i$ and $\g^i_t$ are private to each node $i$. Likewise, the random variable $\bg_t^i$ is also observed and estimated locally at each node $i$. In other words, while the nodes can exchange dual variables and numerical values of the {{ gradient}}s, they may not be willing to reveal the full functional form of the objective or constraint functions and other locally estimated quantities, owing to privacy and security concerns. Such \emph{privacy-preserving} cooperation is common for many secure multi-agent systems \cite{wei20131,wei2012distributed,mota2013d}. To this end, the nodes may be arranged in a star topology, and utilize a centralized controller for collecting and distributing various algorithm iterates. Alternatively, ring topology may be used, allowing a fully distributed implementation, where the exchanges occur only between two immediate neighbors.
 In order to clarify the problem formulation considered in \eqref{P1}, the following simple example is considered. \\
 \textbf{Example 1.} Consider the problem of network utility maximization over a wireless network consisting of $K$ nodes. The aim is to maximize the network-wide utility given by
 \begin{align}\label{util}
 \sum\limits_{i=1}^{K}U(r^i)
 \end{align} 
 where $U(\cdot)$ is a concave function that quantifies the utility obtained by the node $i$ upon achieving a rate $r^i \in[r_{\min},r_{\max}]$. The channel is assumed to be time-varying, and for each channel realization $h^i$, node $i$ allocates the power $p^i_{h^i}$, achieving the instantaneous rate of $\log(1+h^ip^i_{h^i})$, where the noise power is assumed to be one. The goal is to maximize the utility in \eqref{util} subject to constraints on the average rate and the average power consumption, and the full problem can be written as (cf. \eqref{P1}):
 \vspace{-5mm}
 \begin{subequations}\label{example2}
 	\begin{align}
 	\max\limits_{r^i,p^i}   \ \ \ 	\sum\limits_{i=1}^{K}U(r^i)\hspace{2cm}&\\
 	\text{s.t. }	\Ex{\sum\limits_{i=1}^{K}\left(\frac{1}{2}\log(1+h^ip^i_{h^i})\right)}&\geq \sum\limits_{i=1}^{K}r^i\label{rate_costrainrs2}\\
 	\Ex{\sum\limits_{i=1}^{K}p^i_{h^i}}&\leq P_{\max}\label{power_const2}\\
 	{r^i\in[r_{\min},r_{\max}]}, \ p^i\in\mathcal{P}^i
 	\end{align}
 \end{subequations}
 It is remarked that $\mathcal{P}^i$ is a set of functions $p^i:\Rn \rightarrow \Rn$, while $p^i_{h^i}$ is a random variable that depends on $h^i$. That is, the optimization variables in \eqref{example2} include the rates $r^i$ and the power allocation functions $p^i$.	
\vspace{0mm}	
\subsection{Existing approaches and challenges}\label{challenges}
 We begin with explicating the desirable features of an algorithm that seeks to solve \eqref{P1}. Specifically, it is required that any such algorithm meets the following requirements.
 \begin{enumerate}
 	\item[\textbf{F1}.] The algorithm should allow nodes to ``fall behind'' temporarily, e.g., under poor channel conditions and intermittent transmission failures.
 	\item[\textbf{F2}.] The algorithm should allow a distributed implementation, that is, without requiring a star-topology or an FC.
 \end{enumerate}\vspace{0mm}
 These features are particularly important for large and heterogeneous networks where delays may be unavoidable and designating an FC may be impractical. Put differently, (\textbf{F1}) requires the algorithm to handle the inevitable delays that may occur due to temporarily poor channel conditions or noise. Complementarily, (\textbf{F2}) is an  architectural requirement that must be kept in mind when choosing or designing the algorithm.

	Since the number of constraints in \eqref{const_1} are finite, the problem is more tractable in the dual domain. To this end, introducing a dual variable $\lam\in\mathbb{R}^d_+$ corresponding to constraint in \eqref{const_1}, the stochastic (sub-)gradient descent method was proposed for solving such problems in \cite{ale10}. The Lagrangian of {\eqref{P1} is given by}
	\begin{align}\label{lagrangian}
	L(\lb,\X,\P)& =\sk \left\{f^i(\x^i)+\ip{\lb,\EE\left[ \g_t^i(\p_t^i,\x^i)\right]}\right\}
	\end{align}
	where $\X$ and $\P$ collect the primal optimization variables $\{\x^i\}_{i=1}^{K}$ and $\{\p^i\}_{i=1}^K$ respectively. Next, the dual function is obtained by maximizing $L$ with respect to $\X$ and $\P$. Since the Lagrangian is expressed as a sum of $K$ terms, each depending on a different set of variables, the maximization operation is separable and the dual function takes the following form:
	\begin{align}\label{dual_exp_1}
	D(\lb) &= \sum_{i=1}^K \max_{\x^i\in\Xc^i,\ \p^i\in\Pc^i} \left[f^i(\x^i)+\ip{\lb,\left(\EE\left[ \g_t^i(\p_t^i,\x^i)\right]\right)}\right] \nonumber\\
	&=: \sum_{i=1}^K D^i(\lb).\normalsize
	\end{align}
	The dual problem is given by
		\vspace{0mm}
	\begin{align}\label{dual_problem}
	\textsf{D} &= \min_{\lb\in\mathbb{R}^d_+} \sk D^i(\lb).
	\end{align}
While for general problems, it only holds that $\mathsf{D} \geq \mathsf{P}$, that the stochastic resource allocation problem considered here has a zero duality gap, i.e., $\mathsf{P} = \mathsf{D}$ \cite[Thm. 1]{ribeiro2010separation}. The result \colr{utilizes the Lyapunov's convexity theorem and holds under strict feasibility (Slater's condition), bounded {subgradient}s, and continuous cumulative distribution function of $\h^i$ for each $i$.} It is remarked that similar results are well-known in economics \cite{shitovitz1973oligopoly}, wireless communications \cite{luo2008dynamic,ribeiro2010separation}, and control theory \cite{hermes1969functional}.

	The result on zero duality gap legitimizes the dual descent approach, since the dual problem is always convex, and the resultant dual solution can be used for primal recovery. To this end, similar problems in various contexts have been solved via the classical dual descent algorithm \cite{ale10,OFDM_gian, rajawat2011cross, ribeiro2010separation, marques3}, wherein the primal updates utilize various sampling techniques. \colr{It is remarked however that from a practical perspective, solving the dual problem alone is not sufficient, since online allocation of power or rate variables necessitates determining the primal optimum variables $\{{\x^i}^\star, {\p^i}^\star\}$. In the present case, since $\p^i$ is infinite dimensional, primal recovery and consequently, online resource allocation, is not straightforward. }  
	

%
Since the distribution of $\h^i$ is not known in advance, solving \eqref{dual_problem} via classical first or second order descent methods requires a costly Monte Carlo sampling step \cite{gatsis2010class}. Instead, the use of stochastic {subgradient} descent has been proposed in \cite{ale10,wang2011resource}, which takes the following form for $t\geq 1$,
	\begin{enumerate}
		\item[\textbf{D1}.] \textbf{Primal updates:} At time $t$, node $i$ observes or estimates ${\bg_t^i}$, and allocates the resources in accordance with:
		\begin{align}\label{primal_1}
		\hspace{-1cm}    \{{\x^i_t(\lb_{t})}, \p^i_t(\lb_{t})\} &= \argmax_{\x \in \Xc^i, \mathring{\p} \in \Pi^i_t}  f^i(\x)+\ip{\lb_{t}, \g_t^i(\mathring{\p},\x)}
		\end{align}
		\item[\textbf{D2}.] \textbf{Dual update:} The dual updates at time $t$ take the form:
		\begin{align}\label{dual}
		\lb_{t+1}=\Big[\lb_{t}-\ep\sk \g_t^i(\p_t^i(\lb_{t}),{\x^i_t (\lb_{t})})\Big]^+.
		\end{align}
	\end{enumerate}
	Here, $\Pi_t^i :=\{\p^i_{\bg^i_t} \in \Rn^{p} | \p^i \in \Pc^i\}$ is the set of all legitimate values of the vector $\p^i_{\bg^i_t}$. The term $\g_t^i(\p_t^i(\lb_{t}),\x^i_t (\lb_{t}))$ is a stochastic {{gradient}} of the dual function $D^i(\lam)$ at $\lb=\lb_t$. Further for notational brevity, $\g_t^i(\lb):=\g_t^i(\p_t^i(\lb),\x^i_t (\lb))$ is used throughout the paper. Recall that for a given $\lb$, $\g_t^i(\lb)$ is stochastic and depends on the random variable $\h^i_t$, as discussed in Sec. \ref{key}. The algorithm is initialized with an arbitrary $\lb_1$ and the resulting allocations are asymptotically near optimal and feasible. A constant step-size stochastic {gradient} descent algorithm is utilized in the dual domain, which not only allows recovery of optimal primal variables via averaging, but also bestows it the ability to handle small changes in the network topology or other problem parameters. The algorithm can be implemented in a distributed fashion in a network with star-topology, with the help of a fusion center (FC). Within the FC-based implementation, the primal iterates are calculated and used locally at each node $i$. At the end of each time slot, the node $i$ communicates the {gradient} component ${\g^i_t(\lam_t)}$ to the FC, which carries out the dual update \eqref{dual} and broadcasts $\lb_{t+1}$ to all the nodes in the network. Summarizing, the stochastic algorithm is preferred over its deterministic counterpart since it does not require Monte Carlo iterations, yields asymptotically near-optimal resource allocations, and is provably convergent if the stochastic process $\{\bg^i_t\}$ is stationary. 
	
\colr{It is remarked that since \eqref{P1} is infinite dimensional, full primal recovery is generally not possible using such dual methods. Existing algorithms only allow partial primal recovery, as will also be possible via Theorem 2. Specifically, it is well-known that while the running average $\frac{1}{T}\sum_{t = 1}^T \x^i_t$ can be viewed as the approximate version of the primal optimum ${\x^i}^\star$, no such interpretation exists for the infinite-dimensional variable $\p^i$.} \colr{For instance, the running average of $\p^i_t$ cannot be meaningfully related to the corresponding optimum ${\p^i}^\star$ \cite{ale10,wang2011resource}. Nevertheless, the resource allocation carried out using the primal iterates $\{\x^i_t(\lb_t),\p^i_t(\lb_t)\}$ still ensures near-optimality and asymptotic feasibility (cf. Theorem 2). }	

In view of the desiderata (\textbf{F1})-(\textbf{F2}), observe that a network implementation of \eqref{primal_1}-\eqref{dual} is still impractical since it is synchronous and FC-based, and thus has relatively stringent communication requirements. In particular, the algorithm necessitates that each node exchanges messages ({i.e. $\g^i_t(\lam_t)$ \& $\lb_t$}) with the FC at every time-slot, thereby incurring a large communications cost. Since the updates \eqref{primal_1}-\eqref{dual} must occur before the network state changes, the nodes must synchronize and cooperate in order to meet these deadline constraints, ultimately increasing message passing overhead and consuming more energy. Further, nodes in large networks are often heterogeneous, and may not always be able to transmit the {gradients} within the stipulated time. Finally, if the nodes are not deployed in a star-topology around the FC, the need for multi-hop communications further increases the delays, results in heterogeneous energy consumption, and increases protocol overhead. In all such cases, the FC must wait for the updates to arrive from all the nodes, possibly requiring all the nodes to skip resource allocation for one or more time slots, and resulting in a suboptimal asymptotic objective value.
	
	\section{Proposed Algorithm}\label{prop}
	This section details the proposed stochastic dual descent algorithm that incorporates the features (\textbf{F1})-(\textbf{F2}) in its design. To begin with, Sec. \ref{sec-asy} describes the asynchronous variant that tolerates delayed {gradients} still resulting in near-optimal resource allocation. Next, Sec. \ref{sec-inc} details the more general AIS-DD algorithm that is amenable to a distributed implementation. 
\vspace{-3mm}	
	\subsection{Asynchronous stochastic dual descent}\label{sec-asy}
	The asynchronous stochastic dual descent algorithm addresses (\textbf{F1}), and proceeds as follows for all $t \geq 1$:
	\begin{enumerate}
		\item  \textbf{Primal update}: At each time $t$, node $i$ solves
		\begin{align}\label{primal1_1}
		\{\x_t^i(\lb_{t-\pi_i(t)})&, \p_t^i(\lb_{t-\pi_i(t)})\}\nonumber\\
		&\hspace{-1cm}:=\argmax_{\col{\x\in\boldsymbol{\mathcal{X}^i},\mathring{\p}\in\Pi_t^i}}  f^i(\x)+\ip{\lb_{t-\pi_i(t)},\col{\g_t^i(\mathring{\p},\x)}}
		\end{align}
		for all $1 \leq i \leq K$, and some \col{finite} delay $\pi_i(t) \geq 0$. 
		\item  \textbf{Dual update}: The dual update at time $t$ is given by
		\begin{align}\label{dual_1}
		\lb_{t+1}=\left[\lb_t-{\ep}\left(\sk \g_{t-\delta_i(t)}^i(\lb_{t-\tau_i(t)})\right)\right]^+
		\end{align}
		\normalsize
	\end{enumerate}
the stale {{ gradient}}, evaluated at time $t-\delta_i(t)$, is given by  \begin{align}
\g_{t-\delta_i(t)}^i&(\lb_{t-\tau_i(t)})\nonumber
\\
&\!\!\!\!\!\!\!\!:=\g_{t-\delta_i(t)}^i(\p_{t-\delta_i(t)}^i(\lb_{t-\tau_i(t)}),\x_{t-\delta_i(t)}^i(\lb_{t-\tau_i(t)}))\label{for_delay}
\end{align} where the total delay is denoted by  $\tau_i(t):=\pi_i(t)+\delta_i(t)$ and $\pi_i(t),~\delta_i(t) \geq 0$. 
%
			
Different from \eqref{primal_1}, the resource allocation in \eqref{primal1_1} utilizes an old dual variable, $\lb_{t-\pi_i(t)}$.	Further, the dual update is also carried out using an old {gradient} {$\g_{t-\delta_i(t)}^i(\lb_{t-\tau_i(t)})$}. The two modes of asynchrony introduced in \eqref{primal1_1}-\eqref{dual_1} allow the primal and dual updates to be carried out at different time scales. In other words, while the resource allocation at each node still occurs at every time slot, the rate at which the dual variables and the {{ gradient}}s are exchanged may be different. In order to highlight the asynchronous nature of the algorithm, the implementation of \eqref{primal1_1}-\eqref{dual_1} is now described from the perspective of the FC and that of node $i$, in Algorithms \ref{algo_1_a} and \ref{algo_1_b}, respectively.
		\vspace{0mm}
		\col{\begin{algorithm}
				\caption{: \textbf{Operation at FC}}\label{algo_1_a}
				\col{	\begin{itemize}
						\item[(S0)] \textbf{Initialize:} $t=1$, $\lam_1$, ${\ep}$.
						\item[(S1)] \textbf{Update} the dual variable $\lam_t$ as in \eqref{dual_1} using the latest available {{ gradient}}s $\g_{t-\delta_i(t)}^i(\lb_{t-\tau_i(t)})$ for each $1\!\leq\! i \leq K$.
						\item[(S2)] (Optional) \textbf{Broadcast} the updated $\lb_t$ to all the nodes. 
						\item[(S3)] ({{Optional}}) \textbf{Listen} for updated { gradient}s from all the nodes until a time-out. 
						\item[(S4)] $t=t+1$, go to (S1).
					\end{itemize}}
				\end{algorithm}\vspace{-4mm}}
				\col{\begin{algorithm}
						\caption{: \textbf{Operation at node $i$}}\label{algo_1_b}
						\col{	\begin{itemize}
								\item[(S0)] \textbf{Initialize:} $t=1$.
								\item[(S1)] \textbf{Estimate} the associated  random parameter $\h_t^i$. 
								\item[(S2)] \textbf{Allocate} resources using the latest available $\lb_{t-\pi_i(t)}$ as in  \eqref{primal1_1}.
								\item[(S3)] ({{Optional}}) \textbf{Transmit} the { gradient} \col{$\g_t^i(\lb_{t-\pi_i(t)})$} to the FC.
								\item[(S4)] ({{Optional}}) \textbf{Listen} for $\lb_t$ during the rest of the time slot. Only the latest copy of $\lb_t$ is retained in the memory.
								\item[(S5)] $t=t+1$, go to (S1).
							\end{itemize}}
						\end{algorithm}\vspace{0mm}}
						
Observe that in Algorithms 1 and 2, some steps are `optional,' which in the present case, means that they can, at times, be skipped. These steps are however still required to be carried out `often enough', so that the total delay $\tau_i(t)$ is bounded for each node $i$; cf. (\textbf{A4}) in Sec. \ref{known}. Nevertheless, the optional steps in these algorithms allow the dual updates to occur at a different rate. For instance, as long as each packet is correctly time-stamped, the dual updates at the FC may occur as and when the { gradient}s become available, instead of following a fixed schedule.

						The ability to postpone or skip transmissions is important in the context of large heterogeneous networks. For instance, transmissions from the nodes to the FC often requires a multiple access protocol, inter-node coordination, and energy budgeting at each node. Consequently, energy-constrained nodes may extend their lifetime simply by scheduling their transmissions once every few time slots. Similarly, energy harvesting nodes may only transmit when sufficient energy is available, choosing to stay silent in times of energy paucity. The slower nodes may even skip the { gradient} calculation, as long as the resources are allocated in time. 						
						Finally, the communication between the nodes and the FC may also incur delays, arising from queueing, processing, or retransmission at various layers in the protocol stack. The flexibility of carrying out updates with stale information makes the network tolerant to such delays. 
						
						\begin{figure}	
						\centering
							\includegraphics[scale=0.33]{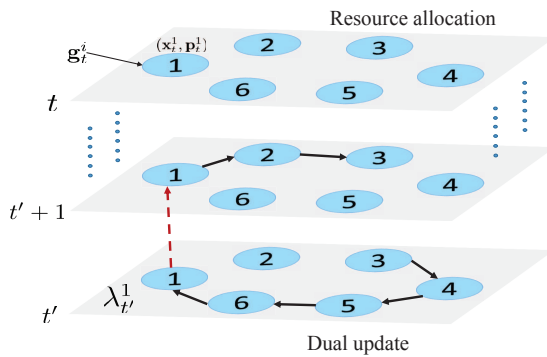}
							\caption{\colr{AIS-DD} algorithm operation.}
							\label{Diagram}	
						\end{figure}
						\vspace{0mm}
						\subsection{Asynchronous Incremental Stochastic Dual Descent}\label{sec-inc} 
						This subsection details an incremental version of the asynchronous algorithm introduced in Sec. \ref{sec-asy}, that obviates the need for an FC and is thus endowed with both (\textbf{F1}) and (\textbf{F2}). The \colr{AIS-DD} algorithm allows each node to perform the partial dual update itself, while passing messages to nodes along a cycle. Specifically, for a network with a ring topology, such that node $i$ passes dual variable $\lb_t^i$ to node $i+1$ and so on, the primal and dual updates take the following form. 
						
						\begin{enumerate}
							\item \textbf{Primal update: }  At time $t$, node $i$ solves
							\begin{align}\label{primal_sub}
							\hspace{-1cm}(\x^i_t&{(\lb^{i-1}_{t-\pi_i(t)})}, \p_{t}^i{(\lb^{i-1}_{t-\pi_i(t)})})\nonumber\\&:=\argmax_{{\x \in\mathcal{X}^i,\  \col{\mathring{\p}\in\Pi_t^i}}} f^i({\x})+\ip{\lb^{i-1}_{t-\pi_i(t)},\g_t^i(\col{\mathring{\p},\x})}.
							\end{align}
							\item \textbf{Dual update: }	At time $t$, the dual update at node $i$ takes the form
							\begin{align}\label{dual_sub}
							\lb^{i}_t=& \left[\lb^{i-1}_t-\ep  [\g_{t-\delta_i(t)}^i(\lb^{i-1}_{t-\tau_i(t)})]\right]^+
							\end{align} 
						\end{enumerate}
						 where,\vspace{0mm} \begin{align}\label{definition_gradient}
						 \g_{t-\delta_i(t)}^i&(\lb^{i-1}_{t-\tau_i(t)})\nonumber
						 \\
						 :=&\g_{t-\delta_i(t)}^i(\p_{t-\delta_i(t)}^i(\lb^{i-1}_{t-\tau_i(t)}),\x_{t-\delta_i(t)}^i(\lb^{i-1}_{t-\tau_i(t)}))
						 \end{align} and $\lb_t^0$ is read as $\lb_{t-1}^K$ and $\lb_t=\lb_t^0$ will be used to evaluate the performance of the asynchronous incremental  algorithms.
						A key feature of the \colr{AIS-DD} algorithm is that the message passing and the dual updates occur in parallel with the resource allocation, as shown in Fig.~\ref{Diagram}. The full implementation details are provided in Algorithm \ref{algo_3}. 
						\begin{algorithm}
							\caption{: \textbf{At node $i$}} \label{algo_3}
								\begin{itemize}
								\vspace{0mm}
								\item[(S0)] \textbf{Initialize:} $t=1, \lam_1^{i-1}$, $\ep$.
								\item[(S1)] \textbf{Estimate} the associated  random parameter $\h_t^i$.
								\item[(S2)] \textbf{Allocate} resources  using the latest available $\lb_{t-\pi_i(t)}^{i-1}$ as in \eqref{primal_sub}. 
								\item[(S3)] ({{Optional}}) \textbf{Receive} $\lb_{t'}^{i-1}$ and carries out the update  \eqref{dual_sub} using an older { gradient} $\g_{t'-\delta_i(t')}^i(\lb^{i-1}_{t'-\tau_i(t')})$.
								\item[(S4)] ({{Optional}}) \textbf{Transmit} an updated $\lb_{t'}^i$ to node $i+1$. 
								\item[(S5)] $t=t+1$, go to (S1).
							\end{itemize}
						\end{algorithm}
								
								Here, the two optional steps may be repeated as long as the received {$\lb_{t'}^{i-1}$} is still old, that is, $t' \leq t$. As in Sec. \ref{sec-asy}, the nodes are allowed to halt the updates temporarily, as long as they ``catch up,'' eventually. In other words, the updates for time $t'$ must be carried out before time $t'+\tau$ so as to ensure that $\tau_i(t) \leq \tau$ for all $t$. Interestingly, although resources are allocated at every time slot, the network may or may not carry out one or more message passing rounds per time-slot. It is remarked that the update in \eqref{dual_sub} must still be performed once at every node for each time index $t'$. Equivalently, the algorithm runs on two `clocks,' one dictating the resource allocation and synchronous with the changes in the network state, and the other governed by the rate at which messages get passed around the network. In the next section, we will establish that the such an algorithm still converges, as long as the difference between the two clocks is bounded. In summary, the \colr{AIS-DD} algorithm has all the benefits of the asynchronous dual descent algorithm of \eqref{primal1_1}-\eqref{dual_1}, while allowing a distributed implementation. 
								
As with classical incremental algorithms, the nodes must communicate along a ring topology. Strictly speaking, the message passing overhead is minimized if the updates occur along a {Hamilton cycle \cite{rabbat2005quantized}}. Even when the network does not admit a Hamilton cycle, an approximate cycle can be found using a random walk protocol \cite{eshragh2011hamiltonian} or the protocol described in {\cite[Sec. VII]{rabbat2005quantized}}. It is remarked that such a route need only be found once, at the start of the algorithm.
							\vspace{0mm}
								\section{Convergence results}\label{sec-conv}
								This section provides the convergence results for the {AIS-SD and AIS-DD} algorithm. We begin with developing and analyzing the convergence of {AIS-SD algorithm}  (cf. Theorem \ref{dual_opt}). It is emphasized that the {AIS-SD} algorithm is general-purpose, and can be used to minimize any sum of functions in an incremental and asynchronous manner.  Subsequently, the asynchronous incremental stochastic gradient descent algorithm is applied to \eqref{P1} in the dual domain, and a primal-averaging method is proposed that yields asymptotically near-optimal allocations (cf. Theorem \ref{primal_opt}). We begin with stating the assumptions and briefly reviewing some of the known results (Sec. \ref{known}). The results for the dual case are outlined in Sec. \ref{sec-dual}, while the near-optimality of the resource allocation is established in Sec. \ref{sec-pri}. 
								\vspace{0mm}
								\subsection{Assumptions and known results}\label{known}
								This subsection begins with the discussion of the following general optimization problem:\vspace{0mm}
								\begin{align}\label{dual_problem2}
								\textsf{D} &= \min_{\lb\in\L} \sk D^i(\lb)
									\vspace{0mm}
								\end{align}
								\normalsize
								\colr{where, $\lam$ is the optimization variable, $\L\subseteq\mathbb{R}^d$ is a non-empty, closed, and convex set, $\textsf{D}$ is finite, and the objective function separates into node-specific cost functions $D^i$.} The goal is to solve  \eqref{dual_problem2} using only the stochastic { subgradient}s {$\g_t^i(\lam)$} of $D^i(\lb)$. {It is emphasized that the general results presented in this subsection do not required $D^i$ to be differentiable. }As in \eqref{P1}, $\g_t^i(\lam)$ is stochastic due to its dependence on the random variable $\h_t^i$ that is first observed at node $i$ at time $t$. Besides the network resource allocation problem considered here, \eqref{dual_problem2} also arises in the context of machine learning \cite{li2014communication} and distributed parameter estimation \cite{Rabbat}. Before describing the known results related to \eqref{dual_problem2}, the necessary assumptions are first stated. 
								
								\begin{enumerate}
									\item[\textbf{A1.} ] \textbf{Non-expansive projection mapping.} The projection mapping $\px{}$ satisfies $\norm{\px{\x}-\px{\y}} \leq \norm{\x-\y}$ for all $\x$ and $\y$. 
									
									\item[\textbf{A2.} ] \textbf{Zero-mean time-invariant error.} Given $\lb$, the averaged {subgradient} function satisfies $\colr{\partial D^i(\lb)} = \EE[\g_{t}^i(\lb)]$. 
									
									\item[\textbf{A3.} ] \textbf{Bounded moments. }  Given $\lb \in \L$, the second moment of ${\g_t^i(\lam)}$ is bounded as follows:
									\begin{align}
															\EE[\norm{\g_t^i(\lb)}^2] \leq {V_i}^2.
									\end{align}
								\end{enumerate} 
								
								These assumptions are not very restrictive, and hold for most real-world resource allocation problems. A stochastic incremental algorithm for solving \eqref{dual_problem2} was first proposed in \cite{veeravalli}. Given a network with ring topology, the updates in \cite{veeravalli} take the form
								\begin{align}
								\lb_t^i &= \px{\lb_t^{i-1} - \epsilon \g_t^i(\lb_t^{i-1})} \label{vvupdate} 
								\end{align}
								where $\lb_t^0$ is read as $\lb_{t-1}^K$. It was shown in \cite{veeravalli}, that under (\textbf{A1})-(\textbf{A3}), the iterates $\lb^i_t$ are asymptotically near optimal in the following sense\vspace{0mm}
								\begin{align}\label{lemma2_1}
								\underset{t\rightarrow\infty}{\text{liminf}}\ \EE\left[D(\lb_t)\right]\leq\mathsf{D}+\O(\ep).
								\end{align}
								where $\lb_t= \lb_t^0=\lb_{t-1}^K$. Further, for the case when the step size is diminishing, i.e. $\ep_t$ satisfies $\lim\limits_{T\rightarrow\infty}\sum\limits_{t=1}^{T}\ep_t = \infty$ and $\lim\limits_{T\rightarrow\infty}\sum\limits_{t=1}^{T}\ep_t^2 < \infty$, it holds that
								\begin{align}
								\liminf_{t\rightarrow\infty}\Ex{D(\lb_t)}= \mathsf{D}.
								\end{align}
								
								This paper provides the corresponding results for the asynchronous case, where the {subgradient} in \eqref{vvupdate} is replaced by an older copy $\g_{t-\delta_i(t)}^i(\lam_{t-\tau_i(t)}^{i-1})$, that is, the stochastic {subgradient} of $D^i(\lam)$  that depends on the random variable $\h_{t-\delta_i(t)}^i$ and is evaluated at $\lb = \lam_{t-\tau_i(t)}^{i-1}$. The delays satisfy $\tau_i(t) \geq \delta_i(t) \geq 0$ and for the special case of no delay, the stochastic {subgradient} simplifies to $\g_t^i(\lam_{t}^{i-1})$ as in \eqref{vvupdate}. The following additional assumption regarding the delays $\delta_i(t)$ and $\tau_i(t)$ is stated. 
								\begin{enumerate}
									\item[\textbf{A4.} ] \textbf{Bounded delay. } For each $1\leq i \leq K$ and $t\geq 1$, it holds that $0\leq \delta_i(t) \leq \tau_i(t) \leq \tau < \infty$.
								\end{enumerate}
								The boundedness assumption on the delay in \textbf{(A4)} allows us to develop convergence results that hold in the worst case, and has been widely used in the context of asynchronous algorithms \cite{tsitsiklis1986distributed}. It is remarked that an alternative assumption, made in \cite{agarwal2011distributed}, allows the delays $\delta_i(t)$ and $\tau_i(t)$ to be random variables with unbounded supports but finite means, but is not pursued here. Even with bounded delays, the extension to the asynchronous case is not straightforward, since the the old stochastic {subgradient}s are not necessarily descent directions on an average. Indeed, the resulting {subgradient} error at time $t$, defined as
								\begin{align}\label{error_asyn2}
								\e_{t,\delta_i(t)}^i:=&\left[\colr{\partial D^i(\lam_{t}^{i-1})}-\g_{t-\delta_i(t)}^i(\lam_{t-\tau_i(t)}^{i-1})\right]
								\end{align}
								is neither zero-mean nor i.i.d. In other words, the asynchronous algorithm cannot simply be considered as a special case of the inexact {subgradient} method.  
								
It is worth pointing out that there is a subtle difference between the definition of the delayed stochastic {gradient} considered here, and those considered in \cite{adadelay, agarwal2011distributed, feyzmahdavian2015asynchronous}. Specifically, the delayed {gradient} in these works takes the form $\g_t^i(\lam_{t-\tau_i(t)}^{i-1})$ instead of the one in \eqref{error_asyn2}. As a result, given $\lam_{t-\tau_i(t)}^{i-1}$, the {gradient} error at time $t$ in these papers is indeed zero mean and i.i.d., an assumption that simplifies the analysis to a certain extent. It is also remarked that the definition of the delayed stochastic {gradient}s in \cite{sirb2016decentralized} is however similar to that considered here. Different from these works, the dual convergence results developed here consider {subgradient}s instead of {gradient}s, and are therefore applicable to a wider range of problems.

								Within the context of network resource allocation, it is also important to study the (near-)optimality of the allocations $\{\x^i_t, \p^i_t\}$. Towards this end, some additional assumptions are first stated.
								\begin{enumerate}
									\item[\textbf{\textbf{A5}}. ] \textbf{Non-atomic probability density function: } The random variables $\{\bg_t^i\}_{i=1}^K$ have non-atomic probability density functions (pdf).
									
									\item[\textbf{A6.} ] \textbf{Slater's condition: } There exists strictly feasible ${(\tilde{\p}^i,\tilde{\x}^i)}$, i.e., $\EE\left[\sk \g_t^i(\tilde{\p}_t^i,\tilde{\x}^i)\right]> 0$.
									\item[\textbf{A7.} ] \textbf{Lipschitz continuous { gradient}s. } Given $\lb$, $\lb' \in \L$, there exists $L_i < \infty$ such that 
									\begin{align}
									\norm{\nabla D^i(\lb)-\nabla D^i(\lb')} \leq L_i\norm{\lb-\lb'}.
									\end{align} 
								\end{enumerate}
								In (\textbf{A5}), for $\{\bg_t^i\}_{i=1}^K$ to have a non-atomic pdf, it should not have any point masses or delta functions. Note that this requirement is not restrictive for most applications arising in wireless communications; see e.g. \cite{ale10}. The Slater's condition is a standard assumption that ensures that $\mathsf{P} = \mathsf{D}$ and consequently, since $\mathsf{P}$ is finite, so is $\mathsf{D}$. The Lipschitz condition in \textbf{(A7)} is however restrictive, since it requires the dual functions $D^i(\lb)$ to be differentiable with respect to $\lb$. In other words, with \textbf{(A7)}, $\g_t^i(\lb)$ is a stochastic { gradient}, not a {subgradient}. It is remarked however that (\textbf{A7}) always holds if $f^i(\x^i)$ is strongly convex. Moreover, it is generally possible to enforce (\textbf{A7}) artificially by adding a strongly convex regularizer (such as $\theta\norm{\x^i}_2^2$) to the primal objective \cite{chen2016stochastic}. Note however that {\textbf{(A5)}-\textbf{(A7)}} will not be utilized while establishing the dual convergence results. 
								
								The incremental or asynchronous { gradient} methods have thus far never been applied to the problem of network resource allocation. For the classical stochastic dual descent method (cf. \eqref{primal_1}-\eqref{dual}], it is known that under (\textbf{A1})-(\textbf{A3}) and (\textbf{A6}), the average resource allocations $\bar{\x}^i:=\stau\x^i_{t}$ are asymptotically feasible and near-optimal \cite{ale10}.
				\vspace{0mm}
								\subsection{Convergence of the \colr{AIS-SD} algorithm} \label{sec-dual}
								This subsection provides the convergence results for the \colr{AIS-SD} algorithm, applied to \eqref{dual_problem2}. For the general case, the updates take the following form:
								\begin{align}\label{general}
								\lb_t^i &= \px{\lb_t^{i-1} - \ep_t\g^i_{t-\delta_i(t)}(\lb^{i-1}_{t-\tau_i(t)})} & 1\leq i \leq K
								\end{align}
								where $\epsilon_t$ is the step-size, $\g_t^i(\lb)$ is a stochastic {subgradient} of $D^i(\lb)$ and $\lb^0_t$ is read as $\lb^K_{t-1}$. Since the dual problem \eqref{dual_problem} is simply a special case of \eqref{dual_problem2}, the results developed here also apply to the iterates $\{\lb_t^i\}$ generated by Algorithm 1. In order to keep the discussion generic, the results are presented for both, diminishing and constant step sizes. 
								
								\begin{theorem}\label{dual_opt}
									The following results apply to the iterates generated by \eqref{general} {with $\lb_t=\lb_t^0$} under \textbf{(A1)}-\textbf{(A4)}.
									\begin{enumerate}[label=(\alph*)]
										\item \textbf{Diminishing  step-size:} If the positive sequence $\{\ep_t\}$ satisfies $\lim\limits_{T\rightarrow\infty}\sum\limits_{t=1}^{T}\ep_t = \infty$ and $\lim\limits_{T\rightarrow\infty}\sum\limits_{t=1}^{T}\ep_t^2 < \infty$, then it holds that
										\begin{align}\label{decss}
										\liminf_{t\rightarrow\infty}\left[\sum\limits_{i=1}^{K}\Ex{D^i({\lam_{t}})}\right]=\mathsf{D}.
										\end{align}
										\item \textbf{Error bound for constant step size:} For $\ep_t = \ep > 0$, and any arbitrary scalar $\eta > 0$, it holds that
\begin{align}\label{rate_thm_one}
\min_{1\leq t\leq T}\sum_{i=1}^K&\Ex{D^i({\lam_{t}})}\leq \mathsf{D} + {{\frac{\ep C(\tau)+\eta}{2}}}
\end{align}
where $T \leq B^2_0/\ep\eta$. Here, $\tau$ is the maximum delay as defined in (\textbf{A4}), $C(\tau):=C_1+(C_2+\tau C_2')$, $C_1=KV^2$, ${C_2:=2KV^2\frac{K-1}{2}}$, ${C_2':=4K^2V^2}$, and $B_0$ is such that $\norm{{\lb_1}-\lams} \leq B_0$.
										
									\end{enumerate} 
								\end{theorem}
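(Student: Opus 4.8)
The plan is to follow the Lyapunov-type route used for the synchronous cyclic incremental stochastic subgradient method of \cite{veeravalli}, tracking the squared distance $\Ex{\norm{\lam_t-\lams}^2}$ of the outer iterate to a dual optimum $\lams\in\L$, and to graft onto it the delay bookkeeping of \cite{nedic2001distributed} so as to control the error caused by using the stale subgradient $\g^i_{t-\delta_i(t)}(\lam^{i-1}_{t-\tau_i(t)})$ in \eqref{general}. The target of the first (and main) part of the argument is the one-step master inequality
\begin{align}
\Ex{\norm{\lam_{t+1}-\lams}^2}\ \leq\ \Ex{\norm{\lam_{t}-\lams}^2}\ -\ 2\ep_t\big(\Ex{D(\lam_t)}-\mathsf{D}\big)\ +\ \ep_t^2\,C(\tau),\nonumber
\end{align}
after which parts (a) and (b) drop out by standard telescoping, using only that $D(\lam_t)\geq\mathsf{D}$ because $\lam_t\in\L$ by \textbf{(A1)}.

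First I would record the elementary per-node recursion: fixing $t$ and $i$, expanding the square and using the non-expansiveness of $\px{\cdot}$ from \textbf{(A1)} together with $\px{\lams}=\lams$ yields
\begin{align}
\norm{\lam_t^{i}-\lams}^2\leq\norm{\lam_t^{i-1}-\lams}^2-2\ep_t\ip{\g^i_{t-\delta_i(t)}(\lam^{i-1}_{t-\tau_i(t)}),\,\lam_t^{i-1}-\lams}+\ep_t^2\,\norm{\g^i_{t-\delta_i(t)}(\lam^{i-1}_{t-\tau_i(t)})}^2.\nonumber
\end{align}
Summing over $i=1,\dots,K$ telescopes the leading terms (recall $\lam_t^0=\lam_{t-1}^K$ and $\lam_{t+1}=\lam_t^K$), leaving $\norm{\lam_t-\lams}^2$, a sum of $K$ inner products, and a sum of $K$ squared-subgradient norms. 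Taking expectations, the latter is bounded by $\ep_t^2\sum_i V_i^2\le\ep_t^2 C_1$ via \textbf{(A3)} — which applies after conditioning on the history up to the computation of $\lam^{i-1}_{t-\tau_i(t)}$, this stale iterate being measurable before the fresh randomness $\h^i_{t-\delta_i(t)}$ is revealed precisely because \textbf{(A4)} gives $\tau_i(t)\ge\delta_i(t)$.

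The delicate part is the inner product $\ip{\g^i_{t-\delta_i(t)}(\lam^{i-1}_{t-\tau_i(t)}),\,\lam_t^{i-1}-\lams}$, since — as emphasized around \eqref{error_asyn2} — the stale stochastic subgradient is not a descent direction on average. I would decompose $\lam_t^{i-1}-\lams=\big(\lam_t^{i-1}-\lam^{i-1}_{t-\tau_i(t)}\big)+\big(\lam^{i-1}_{t-\tau_i(t)}-\lams\big)$. For the second piece, conditioning as above and invoking \textbf{(A2)} gives, in expectation, $\ip{\partial D^i(\lam^{i-1}_{t-\tau_i(t)}),\,\lam^{i-1}_{t-\tau_i(t)}-\lams}\ge D^i(\lam^{i-1}_{t-\tau_i(t)})-D^i(\lams)$ by the subgradient inequality; replacing the stale and intra-cycle iterates by the reference $\lam_t=\lam_t^0$ costs at most $V_i\norm{\lam^{i-1}_{t-\tau_i(t)}-\lam_t}$, because Jensen applied to \textbf{(A3)} gives $\norm{\partial D^i}\le V_i$, hence $D^i$ is $V_i$-Lipschitz. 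For the first piece, Cauchy--Schwarz bounds it below by $-\norm{\g^i_{t-\delta_i(t)}(\lam^{i-1}_{t-\tau_i(t)})}\,\norm{\lam_t^{i-1}-\lam^{i-1}_{t-\tau_i(t)}}$. The remaining, and most error-prone, step is then to bound the drift distances $\norm{\lam_t^{i-1}-\lam^{i-1}_{t-\tau_i(t)}}$, $\norm{\lam^{i-1}_{t-\tau_i(t)}-\lam_t^0}$, and $\norm{\lam_t^{i-1}-\lam_t^0}$: each is a telescoping sum of at most $O(K\tau)$ elementary projected-subgradient micro-steps, and every micro-step moves the iterate by at most $\ep_{t'}$ times one subgradient norm, so by \textbf{(A4)} these distances are $O(\ep)$ in norm. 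Chaining these bounds, using $2ab\le a^2+b^2$, and taking expectations with \textbf{(A3)} turns every drift contribution into an $O(\ep_t^2)$ term: the $\sum_{i}(i-1)$ intra-cycle micro-steps produce the incremental constant $C_2=KV^2(K-1)=2KV^2\tfrac{K-1}{2}$, and the length-$\tau$ delay window produces $\tau C_2'$ with $C_2'=4K^2V^2$. Collecting these with $C_1=KV^2$ and recognizing $\sum_i D^i(\lam_t)=D(\lam_t)$ yields the master inequality. I expect this drift accounting to be the main obstacle, since one must simultaneously track staleness and intra-cycle offsets, all without assuming the dual domain $\L$ is bounded.

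Finally, parts (a) and (b) follow routinely. For (a): summing the master inequality over $t$, using $\sum_t\ep_t^2<\infty$ and nonnegativity of the left side gives $\sum_t\ep_t\big(\Ex{D(\lam_t)}-\mathsf{D}\big)<\infty$ (the $\ep_t\ep_{t'}$ cross terms over a window of size $\tau$ are also summable after $\ep_t\ep_{t'}\le\tfrac12(\ep_t^2+\ep_{t'}^2)$); since $\sum_t\ep_t=\infty$ and each summand is nonnegative, $\liminf_t\big(\Ex{D(\lam_t)}-\mathsf{D}\big)=0$, which is \eqref{decss}. For (b): with $\ep_t=\ep$, summing the master inequality over $t=1,\dots,T$, dropping the nonnegative left-hand term, and using $\norm{\lam_1-\lams}\le B_0$ gives $\tfrac1T\sum_{t=1}^T\big(\Ex{D(\lam_t)}-\mathsf{D}\big)\le\tfrac{B_0^2}{2\ep T}+\tfrac{\ep C(\tau)}{2}$; bounding the minimum over $t$ by this average and taking $T$ of order $B_0^2/(\ep\eta)$ so that $\tfrac{B_0^2}{2\ep T}\le\tfrac{\eta}{2}$ yields \eqref{rate_thm_one}.
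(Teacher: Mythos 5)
Your proposal is correct and follows essentially the same route as the paper's: the same per-node expansion under \textbf{(A1)}, the same split of the inner product into a stale-iterate term (handled by conditioning on $\mathcal{F}^{i-1}_{t-\delta_i(t)}$, the subgradient inequality at $\lam^{i-1}_{t-\tau_i(t)}$, and a $V_i$-Lipschitz correction back to $\lam_t^0$) plus a drift term controlled by Cauchy--Schwarz, and the same micro-step accounting $\EE\norm{\lam_t^i-\lam_\ell^j}\le\ep_\ell V[\abs{i-j}+K(t-\ell)]$ that yields $C_1$, $C_2$, and $\tau C_2'$ --- this is precisely Lemma~\ref{lemma2} and Appendix~\ref{lemma2_proof}. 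The one place you genuinely diverge is the finish of part (b): the paper introduces the stopped sequence $\mathring{\lam}_t$ that freezes at $\lams$ upon entering the level set $L$ and invokes monotone convergence of $\sum_t z_t$ to extract $T\le B_0^2/\ep\eta$, whereas you bound $\min_{1\le t\le T}$ by the running average $\frac{B_0^2}{2\ep T}+\frac{\ep C(\tau)}{2}$ and choose $T$ so the first term is at most $\eta/2$; both deliver the same iteration bound, and yours is the more elementary of the two (your summability argument for part (a) also pins down the $\liminf$ slightly more directly than the paper's $\min_{1\le t\le T}$ passage).
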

								A popular choice for the diminishing step-size parameter $\ep_t$ required in Theorem \ref{dual_opt}(a) is $\ep_t=t^{-\alpha}$ for $\alpha \in (1/2, 1)$. For this case, the objective function in \eqref{dual_problem2} converges exactly to the dual optimum. On the other hand, with a constant step size $\ep$, the minimum objective value comes to within an $O(\ep)$-sized ball around the optimum as $T \rightarrow \infty$. More precisely, the result in \eqref{rate_thm_one} provides an upper bound on the number of iterations required to come $\eta$-close to this ball. Different from the results in \cite{veeravalli}, the size of the ball now depends on the maximum delay $\tau$, quantifying the worst-case impact of using delayed {subgradient}s.
								
								The proof of Theorem \ref{dual_opt} follows the same overall structure as in \cite{veeravalli}, with appropriate modifications introduced to handle the asynchrony. To begin with, the following intermediate lemma splits a function related to the optimality gap in Theorem \ref{dual_opt} into three different terms, and develops bounds on each. The proof of the following lemma is provided in Appendix~\ref{lemma2_proof}.
								
								\begin{lemma}\label{lemma2} 
									Under \textbf{(A1)}-\textbf{(A4)}, the iterates generated by \eqref{general} {with $\lb_t=\lb_t^0$} satisfy the following bounds:
									\begin{align}\label{dual_asynch}
									{\sum_{t=1}^{T}\sum_{i=1}^{K}}2\ep_t\Ex{D^i({\lam_{t}})}-\mathsf{D}&\leq{B_0^2}+ I_0+I_1
									\end{align}
									where,\vspace{-0mm}
									\begin{align}
									\ \ \ \ I_0 &:= \sum_{t}\bigg(\ep_t^2 KV^2+{2\tau KV^2\sum\limits_{i}\ep_t\ep_{t-\tau_i(t)}}\nonumber \\&\hspace{2.5cm} + 2V^2\sum\limits_{i}(i-1)\ep_t{\ep_{t-\tau_i(t)} }\bigg)\label{i0}\\
									I_1&:=2{\sum_{t=1}^{T}\sum_{i=1}^{K}}\ep_t \Ex{\ip{\g_{t-\delta_i(t)}^i(\lam_{t-\tau_i(t)}^{i-1}), \lam_{t-\tau_i(t)}^{i-1}-\lam_t^{i-1}}} \nonumber\\
									& \hspace{4mm}\leq \col{ 2\tau KV^2{\sum_{t=1}^{T}\sum_{i=1}^{K}} \ep_t \ep_{t-\tau_i(t)}} \label{i1}.
									\end{align}
									\colr{where $\norm{\lb_1^0 - \lams} \leq B_0$. Note that $\lb_1^0=\lb_1$.}
								\end{lemma}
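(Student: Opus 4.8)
The plan is to extend the Fej\'er-type (distance-to-optimum) analysis used for the synchronous cyclic incremental method in \cite{veeravalli}, inserting extra terms to absorb the staleness of the subgradients in the spirit of the asynchronous incremental analysis of \cite{nedic2001distributed}. Write $V:=\max_{1\le i\le K}V_i$; by \textbf{(A3)} and Jensen's inequality, $\Ex{\norm{\g_s^i(\lam)}}\le V$ and $\norm{\partial D^i(\lam)}=\norm{\Ex{\g_s^i(\lam)}}\le V$ for every $\lam\in\L$, and recall $\sum_{i=1}^K D^i(\lams)=\mathsf{D}$; as holds for the step sizes considered in Theorem~\ref{dual_opt}, assume $\{\ep_t\}$ is non-increasing. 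First I would expand one round of \eqref{general}: using $\lams=\px{\lams}$ and the non-expansiveness \textbf{(A1)}, one gets $\norm{\lam_t^i-\lams}^2 \le \norm{\lam_t^{i-1}-\lams}^2 - 2\ep_t\ip{\g_{t-\delta_i(t)}^i(\lam_{t-\tau_i(t)}^{i-1}),\,\lam_t^{i-1}-\lams} + \ep_t^2\norm{\g_{t-\delta_i(t)}^i(\lam_{t-\tau_i(t)}^{i-1})}^2$. Summing over $i=1,\dots,K$ telescopes the distance terms ($\lam_t^0=\lam_t$, $\lam_t^K=\lam_{t+1}$); taking expectations and using \textbf{(A3)} bounds the squared-norm terms by $\ep_t^2 KV^2$ (the first summand of \eqref{i0}), leaving the cross terms $-2\ep_t\sum_{i}\Ex{\ip{\g_{t-\delta_i(t)}^i(\lam_{t-\tau_i(t)}^{i-1}),\,\lam_t^{i-1}-\lams}}$ to be controlled.

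The heart of the argument is the treatment of these cross terms, where the asynchrony bites. I would split $\lam_t^{i-1}-\lams=(\lam_t^{i-1}-\lam_{t-\tau_i(t)}^{i-1})+(\lam_{t-\tau_i(t)}^{i-1}-\lams)$. Summed over $i$ and $t$, the first piece is exactly the quantity $I_1$ of \eqref{i1}; to bound it, note that $\lam_{t-\tau_i(t)}^{i-1}$ and $\lam_t^{i-1}$ are separated by exactly $\tau_i(t)K$ partial updates, each moving the iterate by at most $\ep_s\norm{\g_{s-\delta_j(s)}^j(\cdot)}$ (again by \textbf{(A1)}), so Cauchy--Schwarz together with $\Ex{\norm{\g^i}\norm{\g^j}}\le V^2$ and $\ep_s\le\ep_{t-\tau_i(t)}$ (monotonicity, since $s\ge t-\tau_i(t)$) yields the claimed $I_1\le 2\tau KV^2\sum_t\sum_i\ep_t\ep_{t-\tau_i(t)}$. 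For the second piece I would condition on the sigma-algebra generated by all randomness used strictly before node $i$'s update in round $t$: with respect to it $\lam_{t-\tau_i(t)}^{i-1}$ is measurable while $\h_{t-\delta_i(t)}^i$ is independent — this is where $\tau_i(t)\ge\delta_i(t)$ and the i.i.d.\ structure of $\{\h_t^i\}_t$ enter — so \textbf{(A2)} replaces the stale stochastic subgradient by the true subgradient $\partial D^i(\lam_{t-\tau_i(t)}^{i-1})$, and convexity gives $\ip{\partial D^i(\lam_{t-\tau_i(t)}^{i-1}),\,\lam_{t-\tau_i(t)}^{i-1}-\lams}\ge D^i(\lam_{t-\tau_i(t)}^{i-1})-D^i(\lams)$. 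A further subgradient inequality at $\lam_t$, using $\norm{\partial D^i(\lam_t)}\le V$, gives $D^i(\lam_{t-\tau_i(t)}^{i-1})\ge D^i(\lam_t)-V\norm{\lam_t-\lam_{t-\tau_i(t)}^{i-1}}$.

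What remains is to bound the residual drift $\Ex{\norm{\lam_t-\lam_{t-\tau_i(t)}^{i-1}}}$. The iterates $\lam_{t-\tau_i(t)}^{i-1}$ and $\lam_t=\lam_t^0$ are separated by at most $\tau_i(t)K+(i-1)\le\tau K+(i-1)$ partial updates, so summing the per-update bounds and using monotonicity of $\{\ep_s\}$ gives $\Ex{\norm{\lam_t-\lam_{t-\tau_i(t)}^{i-1}}}\le V\big(\tau K+(i-1)\big)\ep_{t-\tau_i(t)}$; multiplied by $2\ep_t$ and summed over $i$, this contributes exactly the second and third summands of \eqref{i0}. Collecting everything, the per-round inequality becomes a bound on $2\ep_t\big(\sum_i\Ex{D^i(\lam_t)}-\mathsf{D}\big)$ by $\Ex{\norm{\lam_t-\lams}^2}-\Ex{\norm{\lam_{t+1}-\lams}^2}$ plus the per-$t$ contributions to $I_0$ and $I_1$; summing over $t=1,\dots,T$, the distance terms telescope, $\norm{\lam_1^0-\lams}=\norm{\lam_1-\lams}\le B_0$ bounds the leading term, the nonnegative $\Ex{\norm{\lam_{T+1}-\lams}^2}$ is dropped, and the rest is precisely $I_0+I_1$, which is \eqref{dual_asynch}.

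The main obstacle is the step in the second paragraph: since the delayed stochastic subgradient $\g_{t-\delta_i(t)}^i(\lam_{t-\tau_i(t)}^{i-1})$ is in general neither unbiased for $\partial D^i(\lam_t^{i-1})$ nor a descent direction on average, the update \eqref{general} cannot be analyzed as an instance of the inexact-subgradient method, and the inner product must be routed through $\lam_{t-\tau_i(t)}^{i-1}$ (so that a conditioning argument makes \textbf{(A2)} applicable — exactly why $\tau_i(t)\ge\delta_i(t)$ is imposed) and then through $\lam_t$, paying for two drift terms. The remaining difficulty is purely combinatorial bookkeeping: counting how many partial updates along the ring separate two given iterates, which is what produces the $\tau K$ and $(i-1)$ factors in the bounds on $I_0$ and $I_1$.
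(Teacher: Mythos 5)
Your proposal follows essentially the same route as the paper's proof in Appendix~\ref{lemma2_proof}: the same Fej\'er expansion of $\norm{\lam_t^i-\lams}^2$ under \textbf{(A1)}, the same add-and-subtract producing the split $\ip{\g^i_{t-\delta_i(t)},\lam_t^{i-1}-\lam_{t-\tau_i(t)}^{i-1}}+\ip{\g^i_{t-\delta_i(t)},\lam_{t-\tau_i(t)}^{i-1}-\lams}$ (the first piece being $I_1$), the same conditioning so that \textbf{(A2)} replaces the stale stochastic subgradient by $\partial D^i(\lam_{t-\tau_i(t)}^{i-1})$, the same two convexity inequalities routing the value through $D^i(\lam_t)$ at the cost of a drift bounded by counting partial updates along the ring, and the same telescoping against $B_0^2$. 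The only imprecision is your description of the conditioning $\sigma$-algebra (``all randomness used strictly before node $i$'s update in round $t$''): it must be the history up to $\h^{i-1}_{t-\delta_i(t)}$, i.e.\ the paper's $\F^{i-1}_{t-\delta_i(t)}$, rather than the history up to round $t$, since otherwise $\h^i_{t-\delta_i(t)}$ would already be measurable and not independent; as you correctly state exactly the two properties this $\sigma$-algebra must satisfy, the argument goes through as written.
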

								\normalsize
								Having developed the necessary bounds, the proof of Theorem \ref{dual_opt} is presented next. 
								\begin{IEEEproof}[Proof of Theorem 1] 
									For the positive sequence $\{\ep_t\}$, it holds that
									\begin{align}
									{\sum_{t=1}^{T}\sum_{i=1}^{K}}2\ep_t \Ex{D^i({\lam_{t}})} \geq \left({\min_{1\leq t\leq T}} \sk \Ex{D^i({\lam_{t}})} \right) \sum_{t=1}^T2\ep_t \nonumber.
									\end{align}
									Substituting the bounds obtained in Lemma \ref{lemma2}, and noting that it always holds that {$\ep_t \leq \ep_{t-\tau}$} for all $\tau \geq 0$, we obtain
									\begin{align}\label{dimi_main}
									{\min_{1\leq t\leq T}} & \sk \Ex{D^i({\lam_{t}})} -\mathsf{D}  \nonumber\\
									&\leq \frac{B_0^2+C_1\sum\limits_{t=1}^{T}\ep_t^2  +{\left(C_2+\tau C_2'\right)}\sum\limits_{t=1}^{T}\ep^2_{[t-\tau]_+}}{2\sum\limits_{t=1}^T\ep_t}
									\end{align}
									where, $C_1:= KV^2$, $C_2':=4K^2V^2$ and $C_2:=2KV^2\frac{K-1}{2}$. Note that in \eqref{dimi_main}, we have used the notation $\ep_{[t-\tau]_+}:=\ep_1$ for all $t \leq \tau$.  Next, for the case when $\ep_t$ is diminishing, and satisfies  ${\lim\limits_{T\rightarrow\infty}\sum\limits_{t=1}^{T}\ep_t = \infty}$ and $\lim\limits_{T\rightarrow\infty}\sum\limits_{t=1}^{T} \ep_t^2 < \infty$, the numerator of the bound on the right stays bounded, while the denominator grows to infinity. Consequently, taking the limit of $T \rightarrow \infty$ on both sides of \eqref{dimi_main}, the required result in \eqref{decss} follows.
									Observe that when the step size is constant, the bound in \eqref{dimi_main} can be written as
									\begin{align}\label{rate_prev}
									{\min_{1\leq t\leq T}} \sk \Ex{D^i({\lam_{t}})}-\mathsf{D}
									&\leq\frac{B_0^2}{2\ep T}+{\frac{\ep}{2}} C_1+{\frac{\ep}{2}} ({C_2+\tau C'_2})\nonumber\\
									& \leq {\frac{B_0^2}{2\ep T}+{\frac{\ep}{2}} C(\tau)} 
									\end{align}
									\normalsize	 
									where $C(\tau)$ is as defined in Theorem \ref{dual_opt}. In the limit as $T\rightarrow\infty$, the bound becomes
										\begin{align}\label{rate_one}
									\inf_{t\geq 1}\Ex{D(\lam_t)}\leq \textsf{D}+\frac{\ep C(\tau)}{2}.
									\end{align}
									which is the asymptotic version of the result in \eqref{rate_thm_one}. 
									
									The rate result in \eqref{rate_thm_one} builds upon a similar result from \cite[Prop. 3.3]{nedic2001convergence}. Intuitively, $\Ex{D(\lam_t)}$ continues to decrease as long as it is significantly larger than $\mathsf{D}$. The rest of the proof characterizes the resulting decrement rigorously and subsequently invokes the monotone convergence theorem in order to establish that $\Ex{D(\lam_t)}$ must eventually come close to $\mathsf{D}$. Given arbitrary $\eta >0$ and recalling that $\lam_t := \lam_t^0$, define the sequence 
	\vspace{0mm}
	\begin{align}
	\mathring{\lam}_{t+1}:=\begin{cases}
	\lam_{t+1}\ ; \ \text{if} \ \ \Ex{D(\lam_t)}\geq \textsf{D}+\frac{\ep C(\tau)+\eta}{2}\\
	\lam^\star \ ; \ \text{otherwise}.
	\end{cases}
	\end{align}	
	Alternatively, $\mathring{\lam}_t$ is same as $\lam_t$ until $\lam_t$ enters level set defined as
	\vspace{0mm}
	\begin{align}
	L=\left\{\lam\in\Lambda ~|~ \Ex{D(\lam)}< \textsf{D}+\frac{\ep C(\tau)+\eta}{2}\right\}
	\end{align}
	and $\mathring{\lam}_t$ terminates at $\lam^\star$. From \eqref{lbexp3} and Lemma~\ref{lemma2}, we have for constant step size $\ep_t=\ep$ that
	\begin{align}
	\Ex{\norm{\mathring{\lam}_{t+1}-\lam^\star}^2}\leq& \Ex{\norm{\mathring{\lam}_{t}-\lam^\star}^2}-2\ep\left[\Ex{D(\mathring{\lam}_t)-\textsf{D}}\right]\nonumber\\&+\ep^2C(\tau)\label{rateproof1}
	\end{align}
	where $\mathring{\lam}_{t+1}:=\mathring{\lam}_{t}^K$ and $\mathring{\lam}_t=\mathring{\lam}_{t}^0$. Next define
	\begin{align}
	z_t:=\begin{cases} 2\ep\left[\Ex{D(\mathring{\lam}_t)-\textsf{D}}\right]-\ep^2C(\tau)\ \ \text{if}\ \ \mathring{\lam}_t\notin L\\
	0\ \ \text{if}\ \ \text{otherwise},
	\end{cases}
	\end{align} 
	so that \eqref{rateproof1} can be written as
	\begin{align}
	\Ex{\norm{\mathring{\lam}_{t+1}-{\lam}^\star}^2}\leq \Ex{\norm{\mathring{\lam}_{t}-\lam^\star}^2}-z_t.\label{moton}
	\end{align}\normalsize 
	From the Monotone convergence theorem, we have that $\sum\limits_{t=1}^{\infty}z_t<\infty$, implying that there exists $T < \infty$, such that $z_t=0$ for all $t\geq T$. Observe that for the case when $\mathring{\lam}_t\notin L$, it holds that 
	\begin{align}
	z_t=&2\ep\left[\Ex{D(\lam_t)-\textsf{D}}\right]-\ep^2C(\tau)\\
	\geq&2\ep\left[\textsf{D}+\frac{\ep C(\tau)+\eta}{2}-\textsf{D}\right]-\ep^2C(\tau)
	\geq \ep\eta.
	\end{align} 
	Consequently, it follows from \eqref{moton} that\vspace{0mm}
	\begin{align}
	\mathbb{E}[||\mathring{\lam}_{T+1}-\lam^\star||^2]&\leq ||\mathring{\lam}_{1}-\lam^\star||^2-\sum\limits_{t=1}^{T}z_t \\
	&\leq B_0 - \sum_{t=1}^Tz_t.
	\end{align}\normalsize	
	Since the term on the left is non-negative, we have that
	$B_0 \geq \sum\limits_{t=1}^{T}z_t\geq T \ep\eta$, yielding the required bound on $T$.
\end{IEEEproof}

\subsection{ Primal near optimality and feasibility}\label{sec-pri}
\colr{The AIS-SD algorithm of Sec. \ref{sec-dual}, when applied to solve the dual problem in \eqref{dual_problem}, is referred to as the AIS-DD algorithm. In order to ensure that the results developed thus far continue to apply to the dual problem, assumptions (\textbf{A5})-(\textbf{A7}) are also required. As mentioned earlier, for the primal problem, $\L$ is simply the non-negative orthant implying that $\mathsf{D}$ is finite.} This subsection establishes the average near-optimality of the \colr{AIS-DD} algorithm in \eqref{primal_sub}-\eqref{dual_sub}. Note that Theorem \ref{dual_opt} does not imply that the allocations $\{\x_t^i,\p_t^i\}$ converge. Instead, the results will make use of the ergodic limit variable 
								\begin{align}
								{\bar{\x}^i_T:=\frac{1}{T}\sum_{t=1}^{T}\x^i_t}
								\end{align} 
								for each $1 \leq i \leq K$. The main theorem for this subsection is presented next.
								\begin{theorem}\label{primal_opt} 
									Under \textbf{(A1)}-\textbf{(A7)} {and for constant step size $\ep>0$}, the iterates generated by \eqref{primal_sub}-\eqref{dual_sub} follow:
									\begin{enumerate}[label=\Alph*.]
										\item \textup{\textbf{Primal near optimality}}\begin{align}\label{Primal_avg}
										{\liminf\limits_{T\rightarrow\infty}}\sk \Ex{f^i\left(\bar{\x}^i_T\right)}\geq & \mathsf{P}-\ep({C_3+\tau C_4}) 
										\end{align}
										{where},\vspace{-4mm}
										\begin{align}
										C_3 =& {(\col{V^2K^2})/2}\nonumber\\
										C_4=& {K^2BLV+2K^2V^2}\nonumber.
										\end{align}
										\item\textbf{\textup{ Asymptotic feasibility}} 
										\begin{align} \label{asyfeas}
										\liminf\limits_{T\rightarrow\infty}{\frac{1}{T}}{\sum_{t=1}^{T}\sum_{i=1}^{K}}\EE\left[\g_{t-\tau_i(t)}^i(\lb^{i-1}_{t-\tau_i(t)})\right]\succeq 0. \ \
										\end{align}
									\end{enumerate}
								\end{theorem}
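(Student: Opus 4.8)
The plan is to establish both parts by reducing the asynchronous incremental iterates $\{\lambda_t^i\}$ to the anchor sequence $\lambda_t:=\lambda_t^0=\lambda_{t-1}^K$, for which the dual estimates of Lemma~\ref{lemma2} and Theorem~\ref{dual_opt} are available, and then controlling the perturbation incurred by this reduction. For the near-optimality bound \eqref{Primal_avg}, I would first invoke concavity of each $f^i$ and Jensen's inequality to get $\sum_i\Ex{f^i(\bar{\x}_T^i)}\ge\frac1T\sum_{t=1}^T\sum_i\Ex{f^i(\x_t^i)}$, so it suffices to lower bound the time average. Writing $\mu^i:=\lambda_{t-\pi_i(t)}^{i-1}$ for the stale, node-dependent multiplier driving node $i$ at time $t$ in \eqref{primal_sub}, the fact that $(\x_t^i,\p_t^i)$ maximizes the instantaneous Lagrangian, compared against the primal optimizer $(\x^{i\star},\p^{i\star})$, yields $\sum_i f^i(\x_t^i)\ge\mathsf{P}+\sum_i\ip{\mu^i,\g_t^i(\p^{i\star},\x^{i\star})-\g_t^i(\mu^i)}$; taking the conditional expectation given the past (with respect to which every $\mu^i$ is measurable while $\h_t^i$ is i.i.d.) and using \textbf{(A2)} gives $\sum_i\Ex{f^i(\x_t^i)}\ge\mathsf{P}+\sum_i\Ex{\ip{\mu^i,\bar{\g}^{i\star}-\nabla D^i(\mu^i)}}$, where $\bar{\g}^{i\star}:=\EE[\g_t^i(\p^{i\star},\x^{i\star})]$ with $\sum_i\bar{\g}^{i\star}\succeq0$, and $\nabla D^i(\mu^i)=\EE[\g_t^i(\mu^i)\mid\mu^i]$ is the dual gradient by \textbf{(A2)}--\textbf{(A7)}.

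The next step is to replace every $\mu^i$ by the common anchor $\lambda_t$. The key estimate is a drift bound: each incremental dual step in \eqref{dual_sub} displaces $\lambda$ by at most $\epsilon$ times a stochastic subgradient of bounded moment (cf.\ \textbf{(A3)}), and under the bounded-delay assumption \textbf{(A4)} at most $\tau+K$ such steps separate $\mu^i$ (and the gradient argument $\lambda_{t-\tau_i(t)}^{i-1}$) from $\lambda_t$, so $\norm{\mu^i-\lambda_t}$ is $O((\tau+K)\epsilon)$ in the relevant sense. Inserting $\lambda_t$ by add-and-subtract, $\sum_i\ip{\mu^i,\bar{\g}^{i\star}}$ splits into $\ip{\lambda_t,\sum_i\bar{\g}^{i\star}}\ge0$ plus a drift residual bounded by the subgradient bound times the drift, and $\sum_i\ip{\mu^i,\nabla D^i(\mu^i)}$ splits into $\ip{\lambda_t,\nabla D(\lambda_t)}$ plus residuals of the form $\ip{\lambda_t,\nabla D^i(\mu^i)-\nabla D^i(\lambda_t)}$ and $\ip{\mu^i-\lambda_t,\nabla D^i(\mu^i)}$, controlled by the Lipschitz-gradient constant $L$ of \textbf{(A7)}, the drift, and a bound $B$ on the dual iterates. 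Finally, $\frac1T\sum_t\Ex{\ip{\lambda_t,\nabla D(\lambda_t)}}$ is handled by the $\norm{\lambda_t}^2$-variant of the dual-descent recursion (the argument behind Lemma~\ref{lemma2} and \eqref{rate_prev} with $\lambda^\star$ replaced by $\mathbf{0}$), giving $\frac1T\sum_t\Ex{\ip{\lambda_t,\nabla D(\lambda_t)}}\le\frac{\norm{\lambda_1}^2}{2\epsilon T}+\frac{\epsilon}{2}V^2K^2+O(\epsilon\tau)$. Averaging the lower bound on $\sum_i\Ex{f^i(\x_t^i)}$ over $t$ and letting $T\to\infty$ kills the $O(1/T)$ term and leaves exactly $\mathsf{P}-\epsilon(C_3+\tau C_4)$ with $C_3=V^2K^2/2$ and the $\tau$-linear contribution $C_4=K^2BLV+2K^2V^2$ produced by the drift residuals; this is \eqref{Primal_avg}.

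For asymptotic feasibility \eqref{asyfeas} I would argue by contradiction via coercivity of the dual function. Slater's condition \textbf{(A6)} provides a strictly feasible $(\tilde{\p},\tilde{\x})$ with $\sum_i\EE[\g_t^i(\tilde{\p},\tilde{\x})]\succeq s\mathbf{1}$ for some $s>0$, whence $D(\lambda)\ge\sum_i f^i(\tilde{\x}^i)+s\norm{\lambda}_1$ for all $\lambda\succeq0$; combining this with the fact that the constant-step form of Lemma~\ref{lemma2} (cf.\ \eqref{rate_prev}) forces $\frac1T\sum_{t=1}^T\Ex{D(\lambda_t)}$ to stay within $O(\epsilon)$ of $\mathsf{D}$ shows that $\frac1T\sum_{t=1}^T\Ex{\norm{\lambda_t}}$ is bounded uniformly in $T$ (this also supplies the bound $B$ used above, in a time-averaged sense). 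On the other hand the projected update obeys $\lambda_{t+1}\succeq\lambda_t-\epsilon\sum_i\g_{t-\delta_i(t)}^i(\lambda_{t-\tau_i(t)}^{i-1})$ componentwise, so $\lambda_{T+1}\succeq\lambda_1-\epsilon\sum_{t=1}^T\sum_i\g_{t-\delta_i(t)}^i(\lambda_{t-\tau_i(t)}^{i-1})$; replacing the gradient time index $t-\delta_i(t)$ by $t-\tau_i(t)$ changes the sum in only $O(\tau)$ boundary terms per node, bounded by \textbf{(A3)}--\textbf{(A4)}, hence $o(T)$. If some coordinate $j$ of $\liminf_T\frac1T\sum_{t=1}^T\sum_i\EE[\g_{t-\tau_i(t)}^i(\lambda_{t-\tau_i(t)}^{i-1})]$ were strictly negative, then $\Ex{\lambda_{T+1}^{(j)}}$ would grow at least linearly along a subsequence; since a single update cannot decrease $\Ex{\lambda_t^{(j)}}$ by more than $O(\epsilon K)$ (again \textbf{(A3)}), $\Ex{\lambda_t^{(j)}}$ would remain of order $T$ over a window of length of order $T$, forcing $\frac1T\sum_{t=1}^T\Ex{\norm{\lambda_t}}\to\infty$ and contradicting the bound just established. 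This proves \eqref{asyfeas}.

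The main obstacle I anticipate is the bookkeeping in the second paragraph: showing that the aggregate perturbation from replacing the family of stale, node-specific multipliers $\{\lambda_{t-\pi_i(t)}^{i-1}\}_i$ and the delayed subgradients by the single anchor $\lambda_t$ is genuinely $O(\epsilon)$ with the stated $\tau$-dependence, uniformly in $t$. The difficulty is intrinsic to the asynchronous setting: the delayed stochastic subgradients are neither zero-mean nor descent directions in expectation (cf.\ the error term \eqref{error_asyn2}), so each cross term must be bounded directly by simultaneously invoking the second-moment bound \textbf{(A3)}, the bounded delay \textbf{(A4)}, and the Lipschitz-gradient condition \textbf{(A7)}, together with the a priori control on the dual iterates. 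This is precisely where the analysis departs from the synchronous stochastic dual descent of \cite{ale10} and from the delay-free incremental analysis of \cite{veeravalli}.
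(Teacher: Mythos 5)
Your proposal is correct in its essentials and would reach conclusions of the stated form, but it reorganizes the argument in ways that genuinely differ from the paper's proof. For part A, the paper does not compare the instantaneous Lagrangian maximizer against the primal optimizer; instead Lemma~\ref{lemma3} uses the exact identity $\Ex{f^i(\x^i_t)}=\Ex{D^i(\lamb)}-\Ex{\ip{\lamb,\nabla D^i(\lamb)}}$, lower-bounds $\sum_i D^i(\lamb)$ by $\mathsf{D}$ plus a single drift correction $I_2$, and appeals to strong duality $\mathsf{D}=\mathsf{P}$ only at the very end; the remaining term $I_3$ is then bounded exactly as you propose, by running the squared-norm recursion with the origin as reference point and isolating the asynchrony error $I_4$, which is controlled via (\textbf{A3}), (\textbf{A4}), (\textbf{A7}), the drift estimate \eqref{last}, the bound $B$ of Lemma~\ref{dual_B}, and a conditioning argument through $\kappa_i(t)$. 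Your weak-duality route (comparing against $(\x^{i\star},\p^{i\star})$ and using $\ip{\lam_t,\sum_i\bar{\g}^{i\star}}\geq 0$) is sound and has the small advantage of not invoking the zero-duality-gap theorem explicitly, but it inserts the anchor $\lam_t$ by add-and-subtract in two inner products rather than one, so the non-$\tau$ constant will not come out exactly as $C_3=V^2K^2/2$: the paper's single correction $I_2$ contributes $\ep V^2K(K-1)/2$, while your double substitution roughly doubles that contribution. For part B, the paper is considerably more direct: it telescopes the projection inequality as you do, divides by $\ep T$, and invokes the pointwise bound $\EE\norm{\lam_t}\leq B$ of Lemma~\ref{dual_B} (itself proved by induction from Slater coercivity, the same mechanism you exploit), so neither the contradiction argument nor the window argument is needed; note also that a merely time-averaged bound on $\EE\norm{\lam_t}$ would not suffice for the paper's use of $B$ inside the per-$t$ bound on $I_4$.

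One concrete soft spot: your justification for replacing the gradient time index $t-\delta_i(t)$ by $t-\tau_i(t)$ in \eqref{asyfeas} --- ``only $O(\tau)$ boundary terms per node'' --- does not hold as stated, since the two sums evaluate stochastic gradients at different (and differently repeated) sample times, and their difference can be of order $TV$ in norm. The correct reconciliation is that, term by term, both quantities have the same expectation, namely $\EE[\nabla D^i(\lb^{i-1}_{t-\tau_i(t)})]$, because $\lb^{i-1}_{t-\tau_i(t)}$ is measurable with respect to the $\sigma$-algebra preceding either sample (cf.\ \eqref{gdelta}); the swap is therefore free under the expectation appearing in \eqref{asyfeas}, and no boundary-term count is required.
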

								
								Intuitively, the resource allocations in \eqref{primal_sub} are near-optimal, with optimality gap depending on the step size $\ep$ and the delay bound $\tau$. Further, the allocations are almost surely asymptotically feasible, regardless of the the delay bound or the step size. As in Sec. \ref{sec-dual}, the proof of Theorem \ref{primal_opt} proceeds by first splitting the optimality gap into three terms and developing bounds on each. The required results are summarized into the following intermediate Lemmas, whose proofs are deferred to Appendices~\ref{boundedness} and \ref{lemma3proof} respectively.  
								\begin{lemma}\label{dual_B}
									\colr{Under \textbf{(A1)}-\textbf{(A6)}}, the iterates $\lb_t^i$ obtained from \eqref{dual_sub} are bounded on an average, i.e., there exists $B < \infty$ such that $\EE\norm{\lb_t} \leq B$ for all $t \geq 1$. 
								\end{lemma}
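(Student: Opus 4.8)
The plan is a Lyapunov drift argument on $\EE\norm{\lam_t-\lams}^2$: I will show that, because Slater's condition forces the dual function to grow at least linearly, the drift of this potential is strictly negative whenever $\norm{\lam_t}$ is large, so the iterates are confined to a bounded region in expectation. Most of the delay‑bookkeeping is already available from the proof of Lemma~\ref{lemma2}; the new ingredients are that it is applied over a single message‑passing cycle rather than telescoped over $t$, and that Slater's condition \textbf{(A6)} is brought in.

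\textbf{One‑cycle recursion.} Using the non‑expansiveness of $[\cdot]^+$ about any $\lams\in\mathbb{R}^d_+$ (assumption \textbf{(A1)}) and telescoping the squared‑distance inequality over $i=1,\dots,K$ within round $t$ exactly as in Lemma~\ref{lemma2}, I obtain
\begin{align*}
\norm{\lam_{t+1}-\lams}^2 &\le \norm{\lam_t-\lams}^2 - 2\ep\sum_{i=1}^{K}\ip{\g_{t-\delta_i(t)}^i(\lam_{t-\tau_i(t)}^{i-1}),\,\lam_t^{i-1}-\lams} \\
&\quad + \ep^2\sum_{i=1}^{K}\norm{\g_{t-\delta_i(t)}^i(\lam_{t-\tau_i(t)}^{i-1})}^2 .
\end{align*}
Writing $\lam_t^{i-1}-\lams=(\lam_t^{i-1}-\lam_{t-\tau_i(t)}^{i-1})+(\lam_{t-\tau_i(t)}^{i-1}-\lams)$, the first summand contributes only an $O(\ep\tau)$ term after Cauchy--Schwarz, \textbf{(A3)} and \textbf{(A4)} (consecutive iterates differ by at most $\ep$ times a subgradient of bounded second moment, and at most $\tau K$ such steps separate the two iterates). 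For the second summand I condition on the randomness that determines $\lam_{t-\tau_i(t)}^{i-1}$; since $\delta_i(t)\le\tau_i(t)$ the innovation $\h_{t-\delta_i(t)}^i$ is observed no earlier than that randomness and is independent of it, so by \textbf{(A2)} its conditional expectation is a subgradient of $D^i$ at $\lam_{t-\tau_i(t)}^{i-1}$; convexity, followed by one more subgradient step to the common point $\lam_t$ — which only uses $\norm{\partial D^i(\lam_t)}\le V_i$, a consequence of \textbf{(A2)}--\textbf{(A3)}, so \textbf{(A7)} is not invoked — gives $\sum_i\ip{\cdot,\cdot}\ge D(\lam_t)-\mathsf{D}-O(\ep\tau)$. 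Taking full expectation and bounding the quadratic term by \textbf{(A3)} yields
\begin{align*}
\EE\norm{\lam_{t+1}-\lams}^2 \le \EE\norm{\lam_t-\lams}^2 - 2\ep\big(\Ex{D(\lam_t)}-\mathsf{D}\big) + \ep^2 c_0
\end{align*}
with $c_0=c_0(\tau,K,\{V_i\})<\infty$.

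\textbf{Linear growth from Slater.} Evaluating $D=\sum_i D^i$ at the strictly feasible point of \textbf{(A6)} and using $\lam\succeq 0$ gives $D(\lam)\ge\underline{P}+\sigma\norm{\lam}_1\ge\underline{P}+\sigma\norm{\lam}$, where $\underline{P}:=\sum_i f^i(\tilde{\x}^i)$ is finite and $\sigma:=\min_k\big(\EE[\sum_i\g_t^i(\tilde{\p}_t^i,\tilde{\x}^i)]\big)_k>0$; hence $\mathsf{D}$ is finite and attained at some $\lams$, and $D(\lam)-\mathsf{D}\ge\sigma\norm{\lam-\lams}-C_0$ with $C_0:=\sigma\norm{\lams}+\mathsf{D}-\underline{P}<\infty$ (here \textbf{(A5)} enters only through the zero‑duality‑gap result that underpins the finiteness of $\mathsf{D}$). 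Substituting into the previous display,
\begin{align*}
\EE\norm{\lam_{t+1}-\lams}^2 \le \EE\norm{\lam_t-\lams}^2 - 2\ep\sigma\,\EE\norm{\lam_t-\lams} + c_1,
\end{align*}
where $c_1:=2\ep C_0+\ep^2 c_0$.

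\textbf{Drift to uniform bound (the main obstacle).} It remains to extract $\sup_t\EE\norm{\lam_t}<\infty$ from this recursion, and this is where I expect the argument to be most delicate: the favorable drift term is linear in the expected norm while the potential is quadratic, so one cannot simply iterate. I would invoke the classical ``supermartingale outside a ball with controlled increments'' mechanism. The per‑cycle increment obeys $\norm{\lam_{t+1}}\le\norm{\lam_t}+\ep\sum_i\norm{\g_{t-\delta_i(t)}^i(\cdot)}$, with expectation at most $\ep\sum_i V_i$ by \textbf{(A3)}; combining this with the conditional form of the Slater inequality above and a one‑step Jensen estimate collapses the recursion to $\EE[\norm{\lam_{t+1}-\lams}\mid\cdot]\le\max\!\big(\norm{\lam_t-\lams}-\tfrac{\ep\sigma}{2},\,\bar a\big)$ for a finite $\bar a$, whence $\EE\norm{\lam_t-\lams}\le\max(\norm{\lam_1-\lams},\bar a)$ for all $t$ by induction and $\EE\norm{\lam_t}\le B:=\norm{\lams}+\max(\norm{\lam_1-\lams},\bar a)$. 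The care needed lies in choosing the conditioning so that the cycle‑$t$ innovations are fresh (as in the one‑cycle recursion step) and in passing from the conditional max‑inequality to the unconditional expectation; for this I would follow the boundedness argument in \cite{nedic2001convergence}.
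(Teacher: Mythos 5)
Your first two steps match the paper's own proof in substance: the one-cycle descent recursion is the constant-step-size specialization of the paper's \eqref{lbexp3}, and your coercivity bound $D(\lam)-\mathsf{D}\ge\sigma\norm{\lam-\lams}-C_0$ is the same Slater inequality the paper uses in \eqref{appen1}--\eqref{bound_on_dual}, merely rearranged. The gap is in your third step, exactly where you anticipated trouble, and it is genuine. The pointwise inequality $\EE[\norm{\lam_{t+1}-\lams}\mid\F_t]\le\max\big(\norm{\lam_t-\lams}-\tfrac{\ep\sigma}{2},\,\bar a\big)$ does not yield $\EE\norm{\lam_{t+1}-\lams}\le\max\big(\EE\norm{\lam_t-\lams}-\tfrac{\ep\sigma}{2},\,\bar a\big)$ upon taking total expectations: the map $x\mapsto\max(x-\delta,\bar a)$ is convex, so Jensen gives $\EE[\max(X-\delta,\bar a)]\ge\max(\EE X-\delta,\bar a)$, the wrong direction, and a two-point distribution for $\norm{\lam_t-\lams}$ straddling $\bar a$ makes the inequality you need fail strictly. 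Nor can the recursion $b_{t+1}\le b_t-2\ep\sigma a_t+c_1$, with $b_t=\EE\norm{\lam_t-\lams}^2$ and $a_t=\EE\norm{\lam_t-\lams}$, be closed on its own, since $a_t\le\sqrt{b_t}$ is again the wrong-way Jensen bound: a small $a_t$ does not force a small $b_t$. So the induction you assert does not go through as stated.

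The paper closes this gap by making the dichotomy deterministic: it splits on the value of the non-random quantity $\Ex{D(\lam_t)}$ relative to the threshold $\mathsf{D}+\ep\tilde{V}/2+\ep\tau\bar{V}$. When $\Ex{D(\lam_t)}$ exceeds the threshold, the one-cycle recursion gives $\EE\norm{\lam_{t+1}-\lams}^2\le\EE\norm{\lam_t-\lams}^2$ and the inductive bound is inherited; when it does not, Slater's condition is applied not to the drift but directly to the iterate, via $\norm{\lam_t}\le\theta\ip{\boldsymbol{1},\lam_t}\le\tfrac{\theta}{C}\big(D(\lam_t)-\sum_i f^i(\tilde{\x}^i)\big)$, which after taking expectations bounds $\EE\norm{\lam_t}$ by an absolute constant; one further step of bounded increment, $\EE\norm{\lam_{t+1}-\lam_t}\le\ep KV$, then extends the bound to $t+1$. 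This sidesteps both the $\EE\max$ issue and the quadratic-potential/linear-drift mismatch you flagged. To salvage your conditional route you would need either an almost-sure supermartingale argument (Robbins--Siegmund) or explicit control of the overshoot of $\norm{\lam_t-\lams}$ above the ball, neither of which follows from \textbf{(A1)}--\textbf{(A6)} without additional work; adopting the paper's deterministic case split is the shorter repair.
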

							\begin{lemma}\label{lemma3} 
									\colr{Under \textbf{(A1)}-\textbf{(A7)}}, the iterates generated by \eqref{primal_sub}-\eqref{dual_sub} satisfy the following bounds:
										\begin{align}
									\sk \Ex{f^i(\bar{\x}_T^i)} \geq \mathsf{D}-	I_2 - I_3
									\end{align}
									where,\vspace{0mm}
									\begin{align}
									\hspace{0mm}I_2 :=& {{\frac{1}{T}}{\sum_{t=1}^{T}\sum_{i=1}^{K}}\E\left[D^i({\lb_{t}})-D^i(\lamb)\right]} \nonumber
									\\
									&\leq {\ep V^2K(K-1)/2 }\nonumber\\
									I_3 :=& {\frac{1}{T}}{\sum_{t=1}^{T}\sum_{i=1}^{K}} \Ex{\ip{\lamb,\nabla D^i(\lamb)}}\nonumber
									\\
									& \leq \frac{\norm{\lb_1}^2}{2\ep T}+\frac{\ep KV^2 }{2} + I_4 \nonumber\\
									I_4 :=&  {\frac{1}{T}}{\sum_{t=1}^{T}\sum_{i=1}^{K}}\Ex{\ip{\lamb,(\nabla D^i(\lamb)-\g_{t-\delta_i(t)}^i(\lam_{t-\tau_i(t)}^{i-1}))}}\nonumber\\
									&\leq  {\ep\tau K^2V(BL+2V)} \nonumber.
									\end{align}
								\end{lemma}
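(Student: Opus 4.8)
The plan is to follow the primal-recovery argument of \cite{ale10}, reworked to accommodate the incremental dual updates and the delays, by splitting the optimality gap into the three pieces $I_2,I_3,I_4$. First, by convexity of $\Xc^i$ the average $\bar{\x}^i_T=\frac{1}{T}\sum_{t=1}^{T}\x^i_t$ lies in $\Xc^i$, and since $f^i$ is concave, Jensen's inequality gives $\sum_{i=1}^{K}\EE[f^i(\bar{\x}^i_T)]\ge\frac{1}{T}\sum_{t=1}^{T}\sum_{i=1}^{K}\EE[f^i(\x^i_t)]$, so it suffices to lower-bound the running average of the per-node instantaneous utilities. Next comes the primal--dual identity: the primal update \eqref{primal_sub} maximizes the per-sample Lagrangian $f^i(\x)+\ip{\lamb,\g^i_t(\mathring{\p},\x)}$ over $\x\in\Xc^i,\ \mathring{\p}\in\Pi^i_t$, and since $\lamb$ is measurable with respect to the history available before the fresh state $\h^i_t$ is drawn, taking the conditional expectation over $\h^i_t$ and invoking \textbf{(A2)} (so that $\EE[\g^i_t(\lamb)\mid\cdot]=\nabla D^i(\lamb)$ and the maximized per-sample Lagrangian has conditional mean $D^i(\lamb)$) yields $\EE[f^i(\x^i_t)]=\EE[D^i(\lamb)]-\EE[\ip{\lamb,\nabla D^i(\lamb)}]$; the primal delay $\pi_i(t)$ only shifts the summation index and is absorbed into the $O(\ep\tau)$ slack of Theorem~\ref{primal_opt}. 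Adding and subtracting $D^i(\lb_t)$ converts $\sum_i\EE[D^i(\lamb)]$ into $\EE[D(\lb_t)]$ minus the inner sum that defines $I_2$, and since $\mathsf{D}\le D(\lb_t)$ (as $\mathsf{D}$ is the minimal value of $D$) one obtains $\frac{1}{T}\sum_{t,i}\EE[f^i(\x^i_t)]\ge\mathsf{D}-I_2-I_3$, which is the stated inequality.

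It then remains to establish the three upper bounds. For $I_2$: by \textbf{(A3)} together with $\nabla D^i=\EE[\g^i_t]$, the subgradients of $D^i$ have norm at most $V:=\max_i V_i$, so each $D^i$ is $V$-Lipschitz; since $\lamb=\lam_t^{i-1}$ is obtained from $\lb_t=\lam_t^0$ through exactly $i-1$ incremental steps, each of size at most $\ep\norm{\g}$, we get $\EE|D^i(\lb_t)-D^i(\lamb)|\le\ep V^2(i-1)$, and summing over $i$ and averaging over $t$ gives $I_2\le\ep V^2K(K-1)/2$. For the part of $I_3$ other than $I_4$, write $\ip{\lamb,\nabla D^i(\lamb)}=\ip{\lamb,\g^i_{t-\delta_i(t)}(\lam_{t-\tau_i(t)}^{i-1})}+\ip{\lamb,\nabla D^i(\lamb)-\g^i_{t-\delta_i(t)}(\lam_{t-\tau_i(t)}^{i-1})}$, the last term being the summand of $I_4$; then squaring the dual recursion \eqref{dual_sub}, using non-expansiveness of $[\cdot]^+$ about the feasible point $\mathbf{0}$, and telescoping first over $i=1,\dots,K$ and then over $t$ (using $\lam_t^K=\lam_{t+1}^0$ and $\lam_1^0=\lb_1$), we obtain $\frac{1}{T}\sum_{t,i}\EE\ip{\lamb,\g^i_{t-\delta_i(t)}(\lam_{t-\tau_i(t)}^{i-1})}\le\frac{\norm{\lb_1}^2}{2\ep T}+\frac{\ep KV^2}{2}$, after using $\EE\norm{\g^i_{t-\delta_i(t)}(\lam_{t-\tau_i(t)}^{i-1})}^2\le V^2$ from \textbf{(A3)} (via the tower property); this gives the claimed bound on $I_3$.

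The main obstacle is $I_4$. The iterate $\lamb$ is a \emph{future} quantity relative to the stale stochastic subgradient $\g^i_{t-\delta_i(t)}(\lam_{t-\tau_i(t)}^{i-1})$, so the gradient error is correlated with $\lamb$ and is not zero-mean; the asynchronous step therefore cannot be treated as a plain inexact-subgradient step. The remedy I would use is a double split: decompose the gradient error as $[\nabla D^i(\lamb)-\nabla D^i(\lam_{t-\tau_i(t)}^{i-1})]+[\nabla D^i(\lam_{t-\tau_i(t)}^{i-1})-\g^i_{t-\delta_i(t)}(\lam_{t-\tau_i(t)}^{i-1})]$, and in the second bracket also split $\lamb=\lam_{t-\tau_i(t)}^{i-1}+\db_i$ with $\db_i:=\lamb-\lam_{t-\tau_i(t)}^{i-1}$. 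The cross term $\EE\ip{\lam_{t-\tau_i(t)}^{i-1},\nabla D^i(\lam_{t-\tau_i(t)}^{i-1})-\g^i_{t-\delta_i(t)}(\lam_{t-\tau_i(t)}^{i-1})}$ vanishes by conditioning on the $\sigma$-algebra that makes $\lam_{t-\tau_i(t)}^{i-1}$ measurable: since $\delta_i(t)\le\tau_i(t)$ by \textbf{(A4)}, and node $i$'s fresh state $\h^i_{t-\delta_i(t)}$ is not consumed by the first $i-1$ incremental updates of round $t-\tau_i(t)$, it is independent of that $\sigma$-algebra and \textbf{(A2)} applies. What remains are (i) the Lipschitz term, bounded using \textbf{(A7)}, Cauchy--Schwarz, the drift estimate $\EE\norm{\db_i}\le\ep\,\tau_i(t)KV\le\ep\tau KV$ (the number of incremental steps separating $\lam_{t-\tau_i(t)}^{i-1}$ from $\lamb$ is exactly $\tau_i(t)K$, each of size at most $\ep\norm{\g}$), and the bound $\EE\norm{\lamb}\le B$ from Lemma~\ref{dual_B}; and (ii) the residual inner products involving $\db_i$, bounded by Cauchy--Schwarz, the same drift estimate, and $\EE\norm{\g}^2\le V^2$. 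Collecting a $K^2BLV$-type contribution from (i) and a $2K^2V^2$-type contribution from (ii) yields $I_4\le\ep\tau K^2V(BL+2V)$. The delicate points are the exact count of incremental steps inside the drift, the verification of the independence needed for \textbf{(A2)}, and the uniform-in-$t$ control of $\norm{\lamb}$ (and, where needed, of its second moment) through Lemma~\ref{dual_B}.
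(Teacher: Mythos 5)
Your proposal is correct and follows essentially the same route as the paper's proof: Jensen's inequality plus the primal--dual identity, adding and subtracting $D^i(\lam_t)$ to expose $\mathsf{D}-I_2-I_3$, the telescoped squared dual recursion for $I_3$, and the split of the asynchrony error in $I_4$ into a Lipschitz part (handled via \textbf{(A7)}, Lemma~\ref{dual_B}, and the iterate-drift bound) and a conditionally zero-mean part. The only cosmetic differences are that you bound $I_2$ via Lipschitz continuity of $D^i$ rather than the first-order convexity inequality with Cauchy--Schwarz, and you anchor the conditioning argument for $I_4$ at $\lam^{i-1}_{t-\tau_i(t)}$ whereas the paper introduces a generic measurable time $\kappa_i(t)$ with $t-\delta_i(t)\leq\kappa_i(t)\leq t$ --- both choices are valid and yield the same $\ep\tau K^2V(BL+2V)$ bound.
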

								\normalsize
								Having established the intermediate results, the proof of Theorem \ref{primal_opt} is now presented. 
								
								\begin{proof}[Proof of Theorem~\ref{primal_opt}]
									The primal near-optimality can be established directly from Lemma \ref{lemma3}. Specifically, summing the bounds for $I_2$, $I_3$, and $I_4$, and taking the limit as $T \rightarrow \infty$, the bound in \eqref{Primal_avg} follows.
									
									In order to establish \eqref{asyfeas}, observe that for any $t\geq 1$ and $1 \leq i \leq K$, it holds that
									\begin{align}
									\lb^{i}_{t} &= \left[\!\lb^{i-1}_{t}\!\!-\!\ep  \g_{t-\delta_i(t)}^i(\lb^{i-1}_{t-\tau_i(t)})\right]^+\nonumber\\
									 &\succeq \lb^{i-1}_{t}-\ep   \g_{t-\delta_i(t)}^i(\lb^{i-1}_{t-\tau_i(t)}) \label{lbsum}
									\end{align}
									\normalsize
									where the inequality holds element-wise. Summing both sides over all $1\leq t \leq T$ and $1 \leq i \leq K$, and rearranging, it follows that
									\begin{align}
									{\frac{1}{T}}{\sum_{t=1}^{T}\sum_{i=1}^{K}}\g_{t-\delta_i(t)}^i(\lb^{i-1}_{t-\tau_i(t)}) &\succeq  \frac{1}{\ep T} {\sum_{t=1}^{T}\sum_{i=1}^{K}} (\lb_t^{i-1} - \lb_t^i)\nonumber
									\\
									&\succeq \frac{\lb^{K}_{1}-\lb^{K}_{t+1}}{\ep T} \nonumber.
									\end{align}
									\normalsize
									Finally, since $\lb_1^K \succeq 0$, \colr{taking expectations on both sides}, it follows that
									\begin{align}\label{pre}
									{\frac{1}{T}}{\sum_{t=1}^{T}\sum_{i=1}^{K}}\EE\left[\g_{t-\delta_i(t)}^i(\lb^{i-1}_{t-\tau_i(t)})\right] \succeq -\frac{B}{\ep T}
									\end{align}
									\normalsize
									{where \eqref{pre} holds due to Lemma~\ref{dual_B}}. In other words, given any $\alpha >0$, there exists $t_0 \in \mathbb{N}$ such that for all $T \geq t_0$, 
									\begin{align}
									{\frac{1}{T}}{\sum_{t=1}^{T}\sum_{i=1}^{K}}\EE\left[\g_{t-\delta_i(t)}^i(\lb^{i-1}_{t-\tau_i(t)})\right] \succeq -\alpha. \ \ 
									\end{align}
									Taking the limit as $T\rightarrow \infty$, the result in \eqref{asyfeas} follows.
								\end{proof}
								\section{Application to co-ordinated beamforming}\label{beam}
								This section considers the co-ordinated downlink beamforming problem in wireless communication networks. The usefulness of the proposed stochastic incremental algorithm is demonstrated by applying it to the beamforming problem and solving it in a distributed and online fashion. Simulations are carried out to confirm that the performance of the proposed algorithm is close to that of the centralized algorithm. 
												\vspace{0mm}
								\subsection{Problem formulation}  
								Consider a multi-cell multi-user wireless network with $B$ base stations and $U$ users. Each user $j \in \{1, \ldots, U\}$ is associated with a single base station $b(j) \in \{1, \ldots, B\}$, and the set of users associated with a base station $i$ is denoted by $\U_i:=\{j | b(j) = i\}$. For the sake of consistency, this section will utilize indices $i$ and $m$ for base stations, and indices $j$, $k$, and $n$ for users, with the additional restriction that $b(j) = b(k) = i$ and $b(n) = m$. Within the downlink scenario considered here, user $j$ can only receive data symbols $s_{j} \in \mathbb{C}$ from its associated base station $b(j)$. The signals transmitted by the base station $i$ intended for other users $k \in \U_i\setminus \{j\}$, as well as the signals transmitted by other base stations $m \neq i$ constitute, respectively, the intra-cell and inter-cell interference at user $j$. The base station $i$, equipped with $N_i$ transmit antennas, utilizes the transmit beamforming vector $\w_{j} \in \mathbb{C}^{N_i \times 1}$ for each of its associated user $j \in \U_i$. Consequently, the received signal at user $j$ is given by
								\begin{align}
								y_j = \h_{ij}^H(\w_{j}s_{j} + \hspace{-0.3cm}\sum_{k \in \U_i\setminus \{j\}} \hspace{-0.3cm}\w_{k}s_{k}) + \sum_{\substack{m \neq i \\ n \in \U_m}} \hspace{-0.1cm}\h_{mj}^H\w_{n}s_{n} + e_j \nonumber
								\end{align}
								where $\h_{ij}$ denotes the complex channel gain vector between base station $i$ and user $j$, and $e_j$ is the zero mean, complex Gaussian random variable with variance $\sigma^2$ that models the noise at user $j$. Assuming $s_k$ to be independent, zero-mean, and with unit variance, the expression for the signal-to-interference-plus-noise ratio (SINR) at user $j$ is given by
								\begin{align}\label{sinr}
								\text{SINR}_{j} := \frac{\abs{\h_{ij}^H\w_{j}}^2}{\hspace{-0.2cm}\sum\limits_{k \in \U_i\setminus \{j\}} \hspace{-0.3cm}\abs{\h_{ij}^H\w_{k}}^2 + \sum\limits_{m \neq i}\sum\limits_{n \in \U_m} \hspace{-0.1cm}\abs{\h_{mj}^H\w_{n}}^2 + \sigma^2}
								\end{align}
								where $i = b(j)$ is the associated base station. 
								
								Within the classical co-ordinated beamforming framework, the goal is to design the beamformers $\{\w_{j}\}_{j=1}^U$ so as to minimize the transmit power, while meeting the SINR constraints at each user. The required optimization problem becomes \cite{4558565}
								\vspace{0mm}
								\begin{align}
								\min_{\{\w_{j}\}_{j=1}^U} &\sum_{j=1}^U \norm{\w_j}^2 \ \ \ \text{subject to }  \text{SINR}_j \geq {\gamma_j}  ~~ \forall~j\label{classic} 
								\end{align}
								where ${\gamma_j}$ is a pre-specified quality-of-service (QoS) threshold {for user $j$}. While the beamforming vectors resulting from \eqref{classic} are optimal, the centralized nature of the optimization problem renders it impractical for application to real networks. For instance, the solution proposed in {\cite{4558565}} requires the estimated channel gains $\{\h_{ij}\}$ to be collected at a centralized location, where \eqref{classic} is solved via an iterative algorithm. In practice however, the entire parameter exchange and the algorithm must complete within a fraction of the coherence time of the channel, lest the designed beamformer becomes obsolete. Such a solution is therefore difficult to implement, not robust to node or link failures, and not scalable to large networks. 
								
								Observe that the {modified} version of \eqref{classic} can be written as
								\vspace{0mm}
								\begin{subequations}\label{dist1}
									\begin{align}
									&\min_{\{\w_{j}, I_{nj}\}} \sum_{j=1}^U \norm{\w_j}^2 \ \ \ \text{subject to }&  \\
									&\frac{\abs{\h_{ij}^H\w_{j}}^2}{\hspace{-0.2cm}\sum\limits_{k \in \U_i\setminus \{j\}} \hspace{-0.3cm}\abs{\h_{ij}^H\w_{k}}^2 + \hspace{-0.1cm}{I_{j}^2} + \sigma^2} \geq {\gamma_j} & \forall~j\label{sinrdist1}\\
									&\col{\sum\limits_{m \neq i}\sum\limits_{n \in \U_m} \hspace{-0.1cm}\abs{\h_{mj}^H\w_{n}}} \leq \col{I_{j}} \ \ & \hspace{-2cm}\forall~j \label{inj}
									\end{align}
								\end{subequations}
								\normalsize
								where $i = b(j)$. Note that constraints in \eqref{sinrdist1} and \eqref{inj} will ensure the SINR is still greater than the required threshold of $\gamma_j$. It is due to the fact that the feasible set is restricted and feasible set of \eqref{dist1} will be subset of that of \eqref{classic} and solution found for \eqref{dist1} can be used for \eqref{classic}. Next, the use of primal or dual decomposition techniques can yield a distributed algorithm for \eqref{dist1}. Nevertheless, such distributed algorithms also suffer from the limitations mentioned earlier, since the optimum beamforming vectors are required at every time slot. 
								
								On the other hand, within the uncoordinated beamforming framework, the optimization variable {$I_{j}$} in \eqref{dist1} is replaced with a pre-specified threshold $\rho$. This renders \eqref{dist1} separable at each base station, allowing beamforming vectors to be designed in parallel. However, the resulting beamformers are suboptimal, and may even render the problem infeasible if $\rho$ is too small or too large. 
								\vspace{0mm}
								\begin{subequations}\label{dist2}
									\begin{align}
									&\min_{\{\w_{j}, I_{nj}\}} \sum_{j=1}^U \norm{\w_j}^2 \ \ \ \text{subject to }&  \\
									&\frac{\abs{\h_{ij}^H\w_{j}}^2}{\hspace{-0.2cm}\sum\limits_{k \in \U_i\setminus \{j\}} \hspace{-0.3cm}\abs{\h_{ij}^H\w_{k}}^2 + \hspace{-0.1cm}\rho^2\sum\limits_{m \neq i}card{(\U_m)} + \sigma^2} \geq {\gamma_j} \ \ ; \ \ \forall~j\label{sinrdist2}\\
									&{ {\abs{\h_{mj}^H\w_{n}(t)} \leq \rho}  \hspace{2.3cm}m\neq i, n \in U_m, \forall~j. } \label{inj3}
									\end{align}
								\end{subequations}
									
									A compromise is possible within the stochastic optimization framework by requiring the bound in \eqref{inj} to only be satisfied on an average. Note that this amounts to relaxing the optimization problem \eqref{dist1} since the SINR constraint is no longer binding at every time slot. The overall stochastic optimization problem can be expressed as \vspace{0mm}
									\begin{subequations}\label{problem2}
										\begin{align}
										&\min_{\{\w_{j}(t), I_{nj}(t)\}} \sum_{j=1}^U \norm{\w_j(t)}^2 \ \ \ \text{subject to }  \\
										&\frac{\abs{\h_{ij}^H\w_{j}(t)}^2}{\hspace{-0.2cm}\sum\limits_{k \in \U_i\setminus \{j\}} \hspace{-0.3cm}\abs{\h_{ij}^H\w_{k}(t)}^2 + \hspace{-0.1cm}{I_{j}^2(t)} + \sigma^2} \geq {\gamma_j}  \ \ \ \ \forall~j \label{SINR_const}\\
										&{\col{\sum\limits_{m \neq i}\sum\limits_{n \in \U_m} \EE\abs{\h_{mj}^H\w_{n}(t)}} } \leq \Ex{I_{j}(t)} \hspace{.8cm}\forall~j \label{intercell_inr_cons}\\
										& \col{\abs{\h_{mj}^H\w_{n}(t)} \leq \rho}  \hspace{2.3cm}m\neq i, n \in U_m, \forall~j \label{inj2} 
										\end{align}
									\end{subequations}
									
									where $i = b(j)$. Different from \eqref{classic} or \eqref{dist1}, the stochastic optimization problem \eqref{problem2} involves finding policies $\w_j(t)$ and ${I_{j}(t)}$, which are not necessarily optimal for every time slot $t$, but only on an average. Specifically, the intercell interference is bounded on an average [cf. \eqref{intercell_inr_cons}], but also instantaneously [cf. \eqref{inj2}], so as to limit the worst case SINR. The problem in \eqref{problem2} can be readily implemented using the proposed  distributed and asynchronous stochastic dual descent algorithm. In contrast to \eqref{dist1}, the stochastic algorithm is not required to converge at every time slot, and allows cooperation over heterogeneous nodes. 
									\vspace{0mm}
													\begin{algorithm}
														\caption{:Operation at node $i$ }\label{algo_1}
														\begin{algorithmic}[1]
															\State {\bf Set} $t=1$, \col{initialize $\lambda^{0}_{j}(1)$}
															\State \col{{\textbf{Beamformer design:}}  At the start of time instant $t$, given old dual variables ${\{\lambda_{j}^i(t-\pi_i(t))\}}$ 
															\begin{align}
															&\hspace{-1cm}\{\w_{j}(t), I_{j}(t)\}_{\{\forall j|i=b(j)\}} :=\label{primup4}\\& \argmin_{\w,I}\sum\limits_{j\in\mathcal{U}_i}{\left[\norm{\w_{j}}^2-\lambda_{j}^i(t-\pi_i(t))I_j\right]} \nonumber\\
															&\ \ \ \ \ \ \ \  +\sum\limits_{j\notin\mathcal{U}_i}{\left[\lambda_{j}^i(t-\pi_i(t))\left(\sum\limits_{n \in \U_i} \hspace{-0.1cm}\abs{\h_{ij}(t)^H\w_{n}}\right)\right]} \nonumber\\
															&\hspace{2cm}\text{subject to} \ \ \eqref{SINR_const}, \eqref{inj2}	\nonumber
															\end{align}\normalsize}
														\vspace{0mm}
															\State \col{\textbf{(Optional) Receive : } \col{$\{\lambda_{j}^i(t')\}_{\forall j}$, where $t'\leq t$}}
															\State \col{\textbf{Dual update:} To complete cycle $t'$, {$\text{for all} \ j$}
																\begin{itemize}
																	\item If $b(j)=i$, then
																	\begin{align}
																	\lambda_{j}^{i+1}(t')=&\left[\lambda_{j}^{i}(t')-\ep I_{j}(t-\delta_i(t))\right]^+\nonumber
																	\end{align}
																	\item If $b(j)\neq i$, then
																	\begin{align}\label{dual_cor}
																	\hspace{-0.9cm}\lambda_{j}^{i+1}(t')=&\left[\!\lambda_{j}^{i}(t')\!+\!\ep\left(\sum\limits_{n \in \U_i} \hspace{-0.1cm}\abs{\h_{ij}(t\!-\!\delta_i(t))^H\w_{n}(t-\delta_i(t))}\right)\right]^+
																	\end{align}
																\end{itemize}
																where, $\lambda_{j}^{1}(t'+1) =\lambda_{j}^{B+1}(t') $ for all $j$}
															\State \textbf{Set} $t=t+1$, go to step 2
														\end{algorithmic}
													\end{algorithm}\vspace{0mm}
									\subsection{Solution to optimization problem}\label{numsol}
									The \colr{AIS-DD} algorithm proposed in Sec. \ref{sec-inc} can now be applied to solve \eqref{problem2}. To this end, associate dual variables ${\lambda_{j}}$ for all {users} ${\{j \in \left[1,U\right]\}}$, and observe that the primal variables at node $i$ include $\{\w_j\}_{j \in \U_i}$ and ${\{I_{j}\}_{j \in \U_i}}$. Departing from the notational convention used thus far, the subscript in $\lambda_j$ is used for indexing the users, while time dependence is indicated by $\lambda_j(t)$. Proceeding as in Sec. \ref{sec-inc}, and recalling that the indices $j$ and $n$ are such that $i = b(j) \neq b(n) = m$, the operation at node $i$ is summarized in Algo.~\ref{algo_1}. Observe that such an implementation entails allocating resources prior to the dual updates, and thus results in the delay of at least one, i.e., $\pi_i(t) \geq 1$, compared to the synchronous version. Conversely, the dual updates occur as and when they are passed around, without creating a bottleneck for the resource allocation. For the sake of simplicity, it is assumed that the dual updates occur along the route $1$, $2$, $\ldots$, $K$.

									Next, simulations are carried out demonstrate the applicability of the stochastic algorithm to the beamforming problem at hand. For the simulations, we consider a system with \col{$B = 10$ and $U = 10$}, with one users per cell. Each of the  base stations  have ten antennas ($N_i = 10$), while  the other algorithm parameters are $\col{\ep=0.5}$, $\sigma^2=1$, $\rho=1.65$ ${\gamma_j = 10\ \text{dB} \ \text{for all}\  j}$. In order to keep the simulations realistic, we assume that the delays in the dual updates arise from random events such as node and link failures. For the centralized algorithm, a random subset of four out of ten nodes are selected to transmit their current { gradient}s to the FC at every time slot. Since the FC utilizes old { gradient}s for the other nodes, it results in an average delay of 5.4 time slots. Similarly, for the incremental algorithm, it is assumed that at every time slot, five to fifteen dual update steps (cf. \eqref{dual}) occur, resulting in an average delay of 5.4 time slots. For instance, if at any time slot, only 8 nodes update, it will result in a delay {$\pi_i(t) = 2$} at the remaining two nodes, where the dual update will occur at the start of the next time slot. The delay may increase further if fewer than 10 nodes update for consecutive time slots and conversely, may decrease if more than 10 updates occur per time slot. Fig. \ref{one} shows the running average of the primal objective function as a function of time \col{using Monte Carlo simulations}. For comparison, the performance of the classical centralized stochastic {gradient} method [cf. \eqref{primal_1}-\eqref{dual}], assuming  perfect message passing, is also shown. As evident, the performance loss due to the delays in the availability of the dual variables in minimal. 
										
										\begin{figure}	
											\hspace{-1mm}\includegraphics[width =1\columnwidth,height=0.18\textheight]{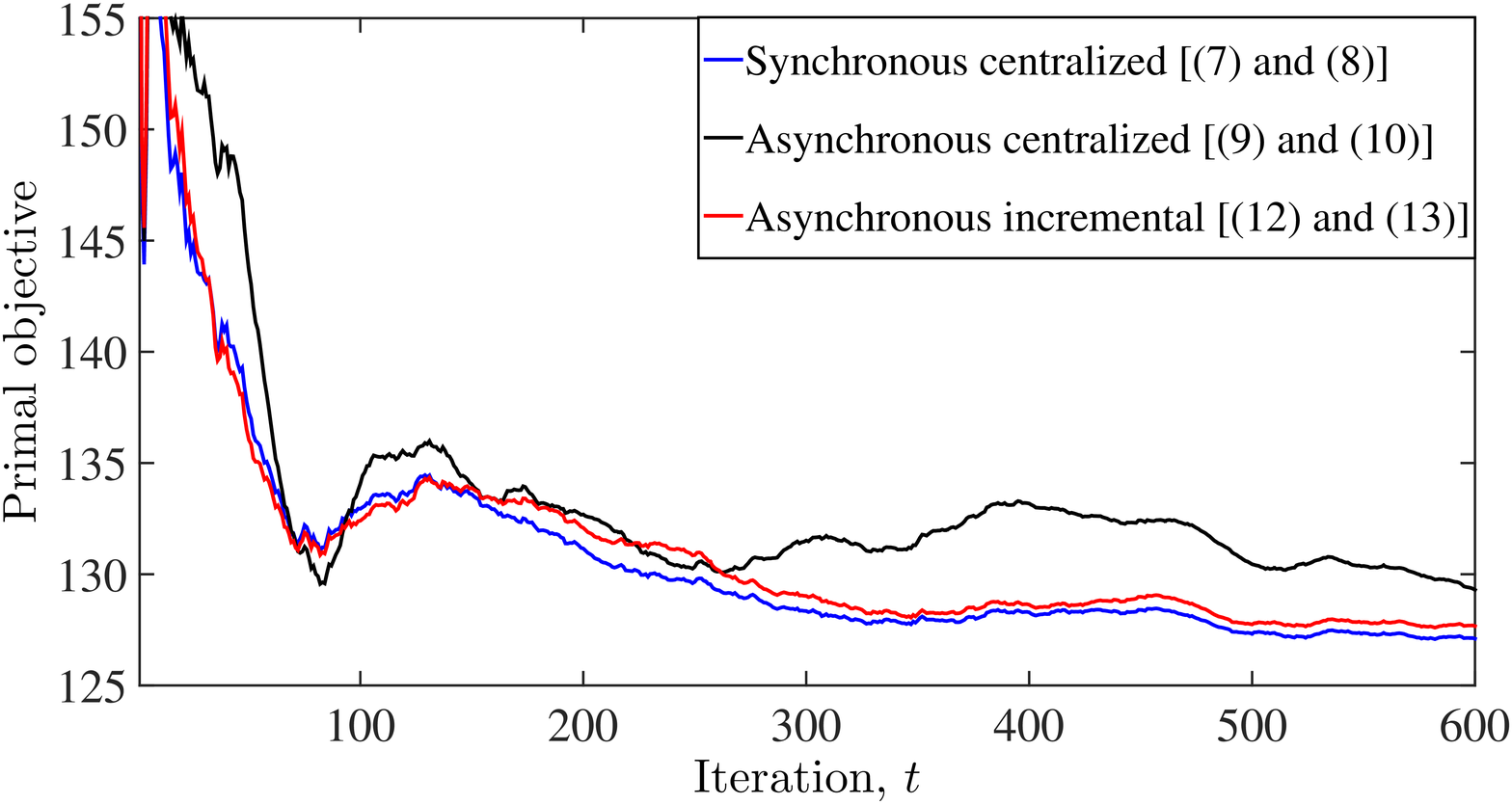}	
											\vspace{0mm}	
											\caption{Primal objective function against {iteration index} $t$ for $B=10$.}
											\label{one}	
											\vspace{-5mm}	
										\end{figure}
									
									In order to motivate the stochastic formulation over the deterministic one, Fig. \ref{two} also compares the average transmit power and SINR achieved for the various cases and for different values of the parameter $\rho$. As expected, the distributed deterministic algorithm performs poorly since it forces the SINR bound to be a constant that does not depend on the channel. By design, the worst case SINR is bounded below by one at every time slot in both the deterministic formulations. Interestingly, the worst case SINR achieved for the relaxed stochastic formulation is also close to one on an average. In return, the stochastic algorithm yields an average transmit power that is equal to or below that obtained by the centralized deterministic formulation. In other words, it is always possible to artificially raise $\gamma$ to a value that is slightly higher than one, so as to obtain an average SINR above $10$ dB, while still getting near-optimal average transmit power. 
								\begin{figure}			
						\hspace{-8mm}\includegraphics[width =1.1\columnwidth,height=0.3\textheight]{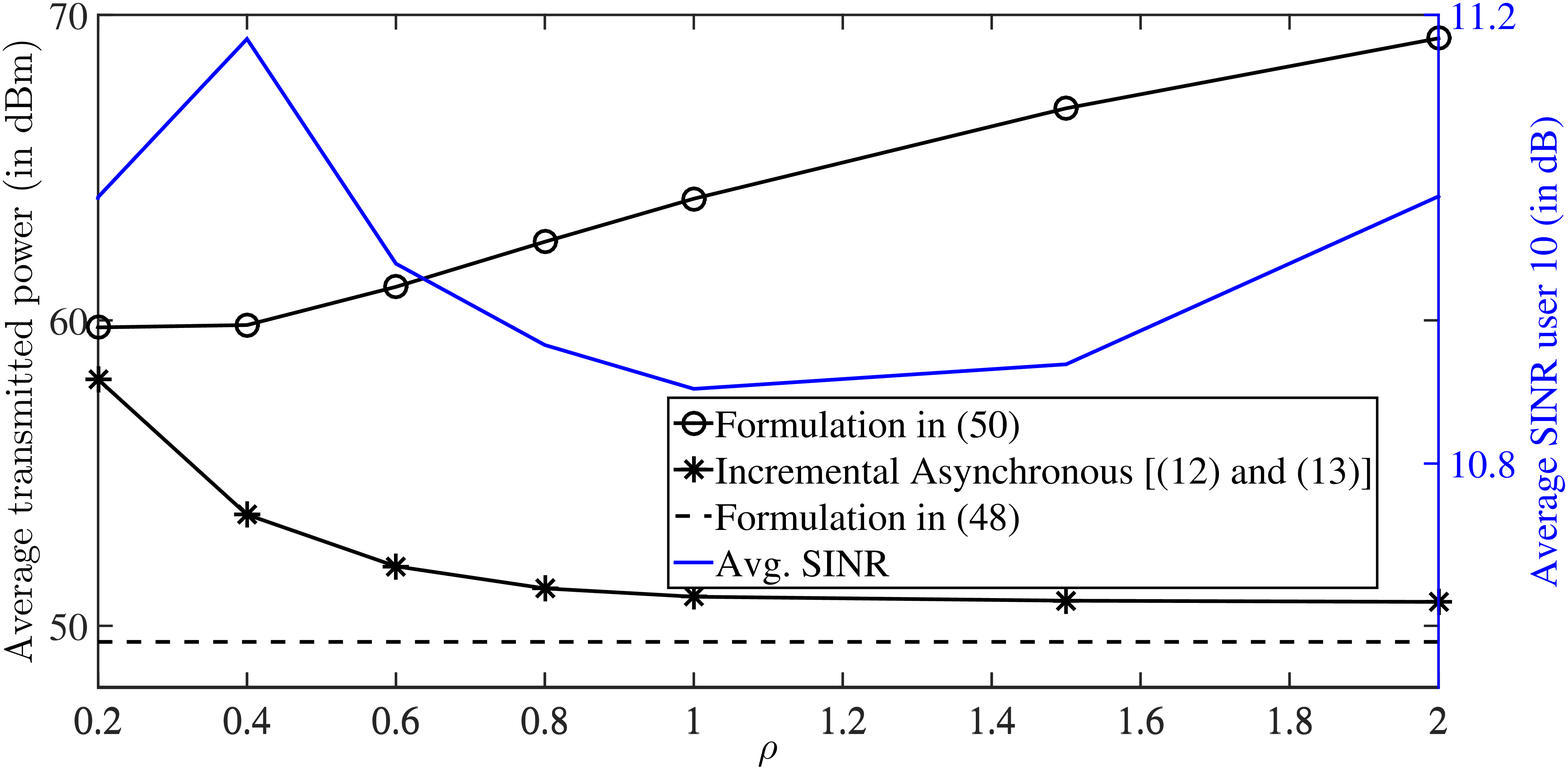}	\vspace{-3mm}	
						\caption{Transmitted power and SINR against $\rho$.}
						\label{two}
						\vspace{-5mm}
					\end{figure}
					
					\begin{figure}	
						\hspace{-5mm}\includegraphics[width =1.1\columnwidth,height=0.3\textheight]{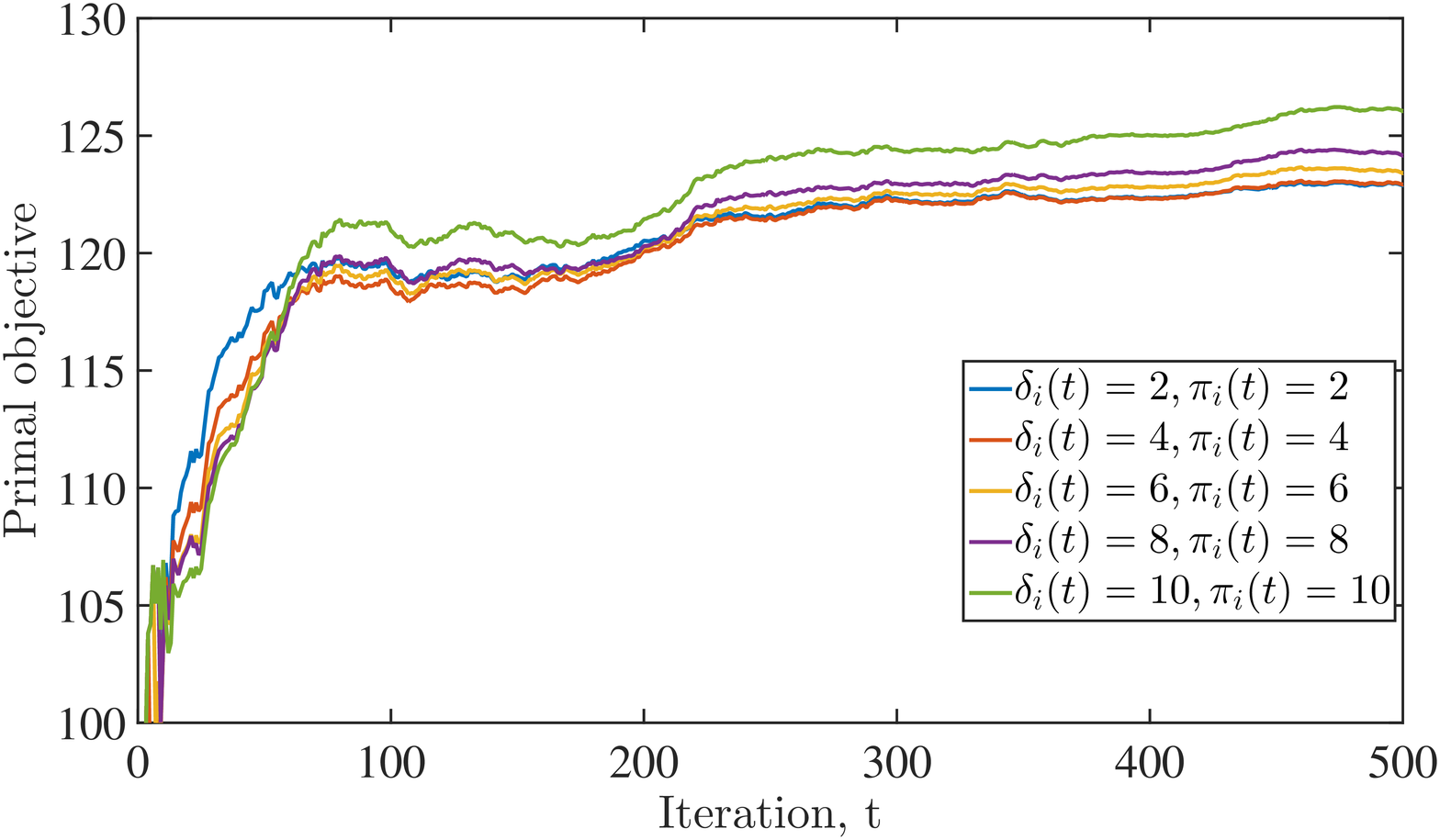}		
						\caption{Primal objective function against time $t$ for different delays,  $B=10$.}
						\label{delay_paper}	
					\end{figure}		
Next, we study the effect of delay on the rate of convergence of the AIS-DD algorithm. For this case, a simple system with $B = 10$ and $U=10$, and constant delays at all the nodes is considered. The base stations have ten antennas each ($N_i=10$) and  the other algorithm parameters are $\ep=0.2$, $\sigma^2=1$, and $\rho=1.65$. Fig. \ref{delay_paper} shows the evolution of the primal objective function for various delay values. As expected, the convergence is slower if both $\pi_i(t)$ and $\delta_i(t)$ are consistently larger. Interestingly however, a small increase in the delays amounts to only a marginal loss in performance.

Finally, in order to demonstrate the scalability of the proposed algorithm, Fig. \ref{Large_nodes} shows an example run for a system with $B=50$ nodes and {$U =50$}. The  base stations have ten antennas each ($N_i= 10$), while  the other algorithm parameters are ${\ep=0.5}$, $\sigma^2=1$, and $\rho=5$. The delay is generated in the similar manner as for the earlier simulations. It can be observed that even when the number of nodes is large, the difference between the performance of the synchronous and asynchronous algorithms remains relatively small.

\begin{figure}	
	\centering
	\includegraphics[width =1\columnwidth,height=0.3\textheight]{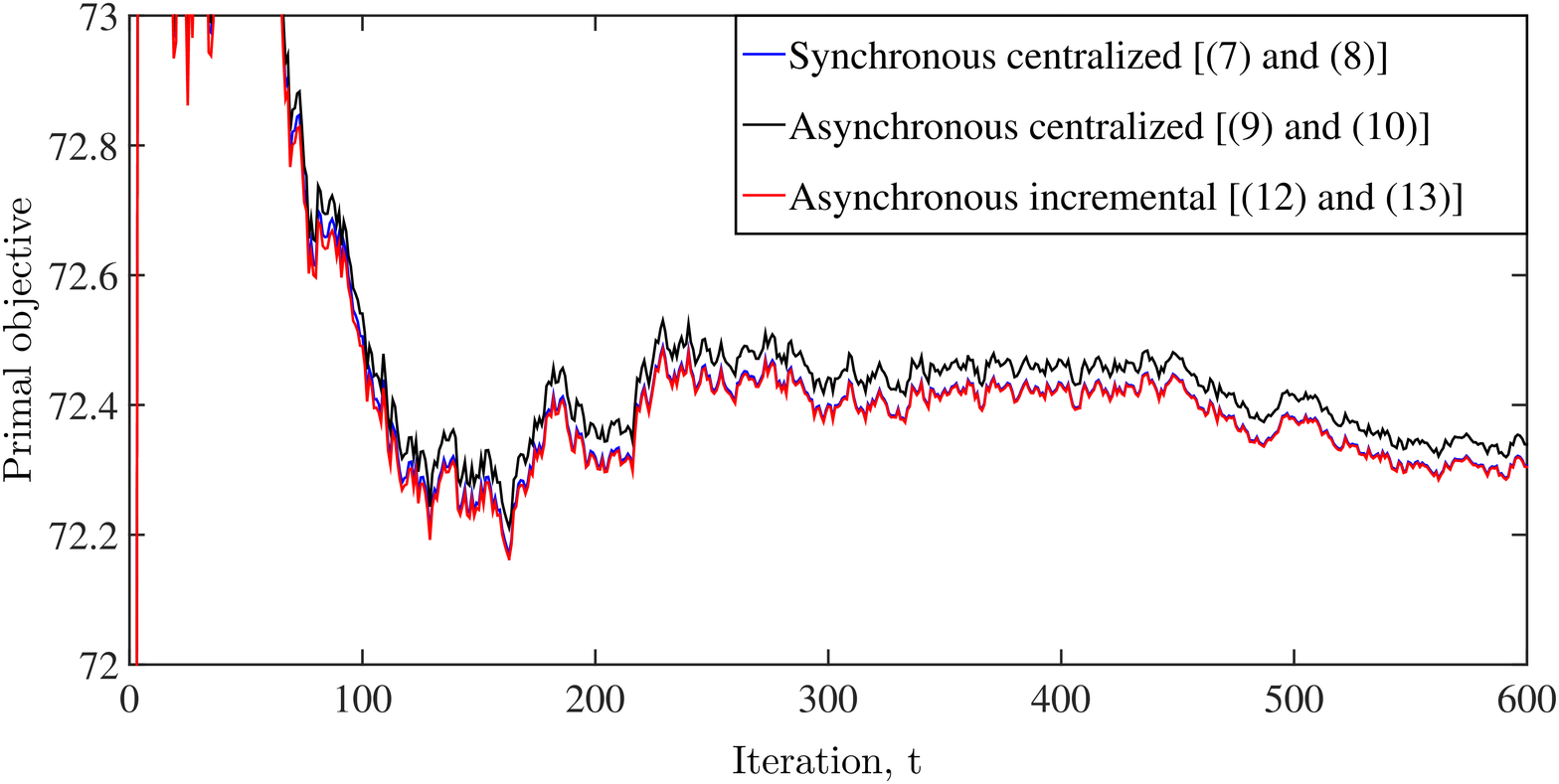}		
	\caption{Primal objective function against {iteration index} $t$, $ B=50$.}
	\label{Large_nodes}	
	\vspace{-8mm}	
\end{figure}


								\vspace{-4mm}
									\section{Conclusion}\label{conc}
									 This paper considers a constrained stochastic resource allocation problem over a heterogeneous network. An asynchronous incremental stochastic dual descent method is proposed for solving the same. The proposed algorithm utilizes delayed {gradients} for carrying out the updates, resulting in an attractive feature that allows nodes to skip or postpone some updates. The convergence of the proposed algorithm is established for both constant and diminishing step sizes. Further, it is shown that the resource allocations arising from the proposed algorithm are also asymptotically near-optimal. A novel multi-cell coordinated beamforming problem is formulated within the stochastic framework considered here, and solved via the proposed algorithm. Simulation results reveal that the impact of using stale stochastic {gradients} is minimal. 
\appendices						
\section{Proof of Lemma~\ref{lemma2}}\label{lemma2_proof}
									
\subsubsection{Preliminaries} Before deriving the required bounds, some preliminary results are first obtained. Recall that the quantity $\g_{\upsilon}^i(\lam)$ denotes the stochastic {(sub-)gradient} of $D^i(\lam)$ at time $t = \upsilon$ and evaluated at $\lam$. Within the context of the dual descent algorithm, we also have that $\g_{\upsilon}^i(\lam):=\g_{\upsilon}^i(\p^i_{\upsilon}(\lam),\x_{\upsilon}^i(\lam))$ for $\upsilon \geq 1$ and all $1\leq i\leq K$. In particular the updates in \eqref{general} use $\upsilon = t-\delta_i(t)$ and $\lam = \lam_{t-\tau_i(t)}^{i-1}$, where $\tau_i(t) = \delta_i(t)+\pi_i(t)$. It can be seen that for the special case of the synchronous algorithm, we have that $\tau_i(t) = \delta_i(t) = 0$ and the stochastic {(sub-)gradient} is written as $\g_t^i(\lam_t^{i-1})$.


For the sake of convenience, let us denote $\g_{t-\delta_i(t)}^i := \g_{t-\delta_i(t)}^i(\lb_{t-\tau_i(t)}^{i-1})$. First, we establish that the distance between the iterates $\lb_t^i$ and $\lb_{{\ell}}^i$ is bounded by a term that is proportional to the step size. From the updates in \eqref{general}, it holds for all $t$ and $1\leq i \leq K$ that
									\begin{align}
									\EE\norm{\lb_t^i-\lb_t^{i-1}} &= \EE\norm{\px{\lam_t^{i-1} -\ep_t\g_{t-\delta_i(t)}^i} - \lb_t^{i-1}} \nonumber\\
									&\leq \ep_t\EE\norm{\g_{t-\delta_i(t)}^i} \leq \ep_tV_i \label{lbbound}
									\end{align}
									where the inequalities in \eqref{lbbound} follow from (\textbf{A1}) and (\textbf{A3}). \colr{The bound $\EE\norm{\g_{t-\delta_i(t)}^i}\leq V_i$ follows from \textbf{A3} and Jensen's inequality which implies that \small$\EE\norm{\g_{t-\delta_i(t)}^i}\leq\sqrt{\EE\norm{\g_{t-\delta_i(t)}^i}^2}$\normalsize.} Given $1\leq i,j \leq K$ and {$t \geq {\ell} \geq 1$}, it follows that
									\begin{align}
									\EE\norm{\lam_{t}^{i}-\lam_{{\ell}}^{j}} &\leq \sum_{k=1}^{i}\EE\norm{\lam_t^{k}-\lam_{t}^{k-1}} \nonumber\\ &\hspace{-1.5cm}+\sum_{s={\ell}+1}^{t-1}\sum_{k=1}^{K}\EE\norm{\lam_s^{k}-\lam_{s}^{k-1}} +\sum_{k=j+1}^{K}\EE\norm{\lam_{{\ell}}^{k}-\lam_{{\ell}}^{k-1}}\nonumber\\
									&\hspace{-1.5cm}\leq \ep_t \sum_{k=1}^{i} V_k+ \left(\sum_{s={\ell}+1}^{t-1}\ep_s\right)\left(\sum_{k=1}^{K} V_k\right)+\ep_{{\ell}}\sum\limits_{k=j+1}^{K} V_k\nonumber
									\\
									&{\hspace{-1.5cm}\leq \ep_t (iV)+ \left(\sum_{s={\ell}+1}^{t-1}\ep_s\right)(KV)+\ep_{{\ell}}(K-j)V}\label{new_one}
									\\
									&{\hspace{-1.5cm}\leq \ep_{{\ell}} V\left[i+(t-{\ell}-1)K+K-j\right]}\label{lambda_bound}
									\\
									&{\hspace{-1.5cm}\leq \ep_{{\ell}} V\left[\abs{i-j}+K(t-{\ell})\right]}\label{last}
									\end{align}\normalsize
									{where \eqref{new_one} is obtained by substituting $V = \max_i V_i$ and \eqref{lambda_bound} follows since $\ep_{{\ell}} \geq \ep_s$ for all ${\ell} \leq s \leq t$.} The result in \eqref{last} holds from the inequality $(i-j)\leq |i-j|$.  
									Further, for $t\geq 1$ and $1\leq i\leq K$, let $\F^i_t$ be the $\sigma$-algebra generated by the random variables \vspace{0mm}
									\begin{align}\label{sigma}
									\{\h_1^1, \ldots, \h_1^K, \h_2^1 \ldots, \h_{t-1}^K, \h_t^1,\ldots,\h_t^i\}.
									\end{align}
									where $\h_t^i$ is the random state variables observed at node $i$ at time $t$. 	With this definition, it holds that 
									\begin{align}
									\Ex{\g^i_{t-\delta_{i}(t)}\mid\mathcal{F}_{t-\delta_i(t)}^{i-1}}=\colr{\partial D^i(\lam_{t-\tau_i(t)}^{i-1})} \label{gdelta}
									\end{align}
									since $\g_{t-\delta_i(t)}^i$ may depend on $\lb_s^{i-1}$ only for $s \leq {t-\tau_i(t)}$.\\
									
									\begin{IEEEproof} The proof is organized into two parts. Subsection \ref{sec-bound} develops a bound on the optimality gap in \eqref{dual_asynch}, in terms of $I_1$. Subsequently, Subsection \ref{sec-i1} develops the required bound on $I_1$. 
										
\subsubsection{Bound on the optimality gap}\label{sec-bound}
An upper bound on $\norm{\lb_t^K-\lams}^2$ is developed by making use of the form of the updates in \eqref{general} for all nodes $1\leq i \leq K$. The bound follows from the use of triangle inequality and the moment bounds in (\textbf{A3}). Further, the bounded delay assumption (\textbf{A4}) enter through the use of \eqref{last}. 

Observe from the updates in \eqref{general} that
										\begin{align}
										\norm{\lb_t^i-\lams}^2 &=  \norm{\px{\lam_t^{i-1} -\ep_t\g_{t-\delta_i(t)}^i}-\lams}^2 \nonumber\\
										&\leq \norm{\lam_t^{i-1} -\ep_t\g_{t-\delta_i(t)}^i-\lams}^2 \label{non_exp}\\
										& \hspace{-1.5cm}= \norm{\lam_t^{i-1}-\lams}^2 -2\ep_t \ip{\g_{t-\delta_i(t)}^i, \lam_t^{i-1}-\lams} +\ep_t^2 \norm{\g_{t-\delta_i(t)}^i}^2\nonumber\\
										& \hspace{-1.5cm}= \norm{\lam_t^{i-1}-\lams}^2 -2\ep_t \ip{\g_{t-\delta_i(t)}^i, \lam_{t-\tau_i(t)}^{i-1}-\lams} \nonumber\\
										&\hspace{-1cm}+\ep_t^2 \norm{\g_{t-\delta_i(t)}^i}^2-2\ep_t \ip{\g_{t-\delta_i(t)}^i, \lam_t^{i-1}-\lam_{t-\tau_i(t)}^{i-1}}\label{dual1}
										\end{align}
										where \eqref{non_exp} follows form (\textbf{A1}), and the term $2\ep_t\ip{\g^i_{t-\delta_i(t)}, \lb^{i-1}_{t-\tau_i(t)}}$ has been added and subtracted to obtain \eqref{dual1}. Taking expectations on both sides and summing over all $1\leq i \leq K$ and $1\leq t\leq T$, we obtain
										\begin{align}
										\EE{\norm{{\lb_T^K}-\lams}^2} &\!\!\!= \EE{\norm{\lb_1^{0}-\lams}^2} + {\sum_{t=1}^{T}\sum_{i=1}^{K}} \ep_t^2\EE{\norm{\g_{t-\delta_i(t)}^i}^2}\! \!+ \!\!I_1 \nonumber\\
										&\!\!\!-2 {\sum_{t=1}^{T}\sum_{i=1}^{K}}\ep_t \Ex{\ip{\g_{t-\delta_i(t)}^i, \lam_{t-\tau_i(t)}^{i-1}\!\!-\!\!\lams}} \label{dual2}
										\end{align}\normalsize
										where $I_1$ is as defined in Lemma \ref{lemma2}. Deferring the bound on $I_1$ to Subsection \ref{sec-i1}, the last term in \eqref{dual2} is analyzed first. In particular, it holds from \eqref{gdelta} that
										\begin{align}
										&\Ex{\ip{\g_{t-\delta_i(t)}^i, \lam_{t-\tau_i(t)}^{i-1}-\lams}} \nonumber\\
										&\hspace{1cm}= \Ex{\ip{\Ex{\g_{t-\delta_i(t)}^i \mid \mathcal{F}_{t-\delta_i(t)}^{i-1}},\lam_{t-\tau_i(t)}^{i-1}-\lams}} \nonumber\\
										&\hspace{1cm}= \Ex{\ip{\colr{\partial D^i(\lam_{t-\tau_i(t)}^{i-1})},\lam_{t-\tau_i(t)}^{i-1}-\lams}}.\label{63}
										\end{align}
										Further, since the functions $D^i(\lb)$ are convex, it holds that
										\begin{align}
										&-\ip{\colr{\partial} D^i(\lam_{t-\tau_i(t)}^{i-1}),\lam_{t-\tau_i(t)}^{i-1}-\lams} \leq D^i(\lams) - D^i(\lam_{t-\tau_i(t)}^{i-1}) \nonumber\\
										&= D^i(\lams) - D^i({\lb_{t}^0}) + D^i({\lb_{t}^0}) - D^i(\lam_{t-\tau_i(t)}^{i-1}) \label{eq0}\\
										&\leq D^i(\lams) - D^i({\lb_{t}^0}) + \ip{\colr{\partial} D^i({\lb_{t}^0}), {\lb_{t}^0} -\lam_{t-\tau_i(t)}^{i-1}} \label{eq1}\\
										&\leq D^i(\lams) - D^i({\lb_{t}^0}) + V_i\norm{{{\lb_{t}^0}}-\lam_{t-\tau_i(t)}^{i-1}}\label{eq2} 
										\end{align}
										where \eqref{eq0}-\eqref{eq1} follow from the first order convexity condition for $D^i$ and \eqref{eq2} follows from the use of triangle inequality, and the fact that given any $\lb \in \L$, 
										\begin{align}
										\norm{\colr{\partial D^i(\lb)}} = \norm{\EE g_t^i(\lb)} \leq \sqrt{\EE\norm{g_t^i(\lb)}^2} \leq V_i. \label{vibound} 
										\end{align}
										For the last term in \eqref{eq2}, taking expectation and utilizing the result in \eqref{last}, it follows that
										\begin{align}
										\EE\norm{{{\lb_{t}^0}}-\lam_{t-\tau_i(t)}^{i-1}} &\leq {\ep_{t-\tau_i(t)} \left[(i-1)V+KV(\tau_i(t))\right]}\label{lbexp2}
										\end{align}
										for $1\leq i\leq K$. Finally, substituting \eqref{eq2}, \eqref{lbexp2} in \eqref{dual2}, and using (\textbf{A3}), it follows that
										\begin{align}
										\EE\norm{{\lb_T^K} - \lams}^2 \leq& \norm{\lb_1^0-\lams}^2 +{\sum_{t=1}^{T}\sum_{i=1}^{K}}\ep_t^2V_i^2 + I_1\nonumber\\&- 2{\sum_{t=1}^{T}\sum_{i=1}^{K}} \ep_t\left[\EE D^i({\lb_{t}^0}) - {D^i(\lams)}\right]
										\nonumber\\&+2{\sum_{t=1}^{T}\sum_{i=1}^{K}}\ep_t{\ep_{t-\tau_i(t)} \!V_i\left[(\!i\!-\!1\!)V+KV(\tau_i(t))\right]} \label{lbexp3}.
										\end{align}
										Since the left-hand side is non-negative and $\norm{\lb_1^0 - \lams} \leq B_0$, the first part of Lemma \ref{lemma2} is obtained simply by rearranging the terms in \eqref{lbexp3} 
										\begin{align}
										2{\sum_{t=1}^{T}\sum_{i=1}^{K}} \ep_t\left[\EE D^i({\lb_{t}^0}) - {D^i(\lams)}\right] &\leq B_0^2 +  I_1 \nonumber\\
										&\hspace{-4.5cm}+ {\sum_{t=1}^{T}\sum_{i=1}^{K}}\left(\ep_t^2 V_i^2 + 2\ep_t{\ep_{t-\tau_i(t)} V_i\left[(i-1)V+\tau KV\right]}\right)\nonumber
										\\
										&\leq \!\!B_0^2\!\! +\!\! I_0\!
										\! + I_1  \label{bet}
										\end{align} 
										where the first inequality in \eqref{bet} follows since $\tau_i(t)\leq \tau$, and $\ep_t$ is non-increasing sequence. Finally, the second inequality in \eqref{bet} follows from substituting $V_i \leq V$ for all $1\leq i \leq K$, and $I_0$ is as defined in Lemma \ref{lemma2}.

\subsubsection{Bound on $I_1$}\label{sec-i1}
In order to derive a bound on $I_1$, we make use of the Cauchy-Schwartz inequality as follows:
\begin{align}
										\Ex{\ip{\g_{t-\delta_i(t)}^i, \lam_{t-\tau_i(t)}^{i-1}-\lam_t^{i-1}}}&\leq V_i\EE\norm{\lb_{t}^{i-1} -\lb_{t-\tau_i(t)}^{i-1}} \nonumber\\
										& {\hspace{-2cm}\leq \tau_i(t)\ep_{t-\tau_i(t)} V_iKV} \label{lipbound1}
										\end{align}
										where \eqref{lipbound1} follows from {\eqref{last}}. Consequently, 
										\begin{align}
										I_1 \!\! &\leq \!\!{\sum_{t=1}^{T}\!\!\sum_{i=1}^{K}}2\ep_t {\ep_{t-\tau_i(t)} \tau_i(t)V_iKV}\leq2\tau KV{\sum_{t=1}^{T}\sum_{i=1}^{K}} V_i\ep_t\ep_{t-\tau_i(t)}\label{i1bound}
										\end{align} 
										where \eqref{i1bound} utilizes the bounds $\tau_i(t)\leq \tau$. Finally, substituting $V = \max_i V_i$, we obtain ${I_1 \leq 2\tau KV^2{\sum_{t=1}^{T}\sum_{i=1}^{K}} \ep_t \ep_{t-\tau_i(t)}}$										
										which is the required bound. 
									\end{IEEEproof}	
									\vspace{-5mm}
\section{Proof of Lemma~\ref{dual_B}}\label{boundedness}
Here, we establish that the dual iterates always stay bounded, thanks to the Slater's condition in (\textbf{A6}). The proof begins with establishing an upper bound on the per-iteration increase in the value of $\EE\norm{{\colr{\lb_t^0}} - \lams}^2$, and subsequently utilizes an induction argument to derive the following bound for all $t \geq 1$:
\vspace{-5mm}
\begin{align}\label{main_bound}
\EE\norm{\colr{\lb_{t}}} &\leq 2\norm{\lams}\!\!+\!\!\max \bigg\{\norm{\lb_1},\frac{\theta}{C}\left[\mathsf{D}-\sk \Ex{f^i(\tilde{\x}^i)}\right]\nonumber\\
&\hspace{2.3cm}+ \frac{\ep\theta KV^2}{2C}+ \frac{2\theta\ep \tau \bar{V}}{C}{+\ep VK}\bigg\}
\end{align}\normalsize  
where, $\theta$ and $C$ are positive constants, $\tilde{V}\!\! \!=\!\!\! V^2K(K\!\!-\!\!1)$, $\bar{V} \!\!\!=\!\!\! 2K^2V^2$, and $\{\bar{\x}^i\}$ is a slater point of \eqref{P1}. Since $\norm{\lam^\star}$ is bounded, the right hand side of  \eqref{main_bound} serves as the bound on $\EE\norm{\lam_t}$ .

									\begin{IEEEproof}
									 \colr{In order to prove \eqref{main_bound}, we will instead establish a more general result that takes the form:
	\begin{align}\label{bounded}
	\EE\norm{\lb_t^0 - \lams} \leq &\max \bigg\{\norm{\lb_1^0-\lams},\frac{\theta}{C}\left[\mathsf{D}-\sk \Ex{f^i(\tilde{\x}^i)}\right]\nonumber\\
	&\hspace{0cm}+\! \frac{\ep\theta KV^2}{2C}\!+\! \frac{2\theta\ep \tau \bar{V}}{C}+\norm{\lams}{+\ep VK}\bigg\}.
	\end{align}
	where, recall that $\lam_t^0 = \lam_t$ and $\lam_1^0 = \lam_1$. The desired result in \eqref{main_bound} will follow by applying the triangle inequality to \eqref{bounded}. The proof of \eqref{bounded} follows via induction. It can be seen that the inequality in \eqref{bounded} holds trivially for the base case of $t=1$. As part of the inductive hypothesis, assume that \eqref{bounded} holds for $t$ where $t\geq1$. It remains to show that it also holds for $t+1$.} We split the argument into the following two cases. 
										
										\emph{\textbf{Case 1. } $\EE D({\lb_{t}^0}) > \mathsf{D}+\ep {{\tilde{V}}/2}+ {\ep \tau \bar{V}}\normalsize$:} In this case, it holds that $\EE\norm{\colr{\lb_{t+1}^0} - \lams}^2 \leq \EE\norm{{\lb_{t}^0}-\lams}^2\normalsize$. \colr{Consequently, the induction hypothesis for time $t$ implies that \eqref{bounded} also holds for time $t+1$}.
										
										\emph{\textbf{Case 2. }  $\EE D({\lb_{t}^0}) \leq { \mathsf{D}+\ep {{\tilde{V}}/2}+ {\ep \tau \bar{V}}}$:} Recall that the dual function in \eqref{dual_exp_1} is defined as
										\begin{align}\label{appen1}
										D({\lb_{t}^0})&=\max_{\x^i\in\mathcal{X},\ \p_t^i\in\mathcal{P}_t}  \sk \left[f^i(\x^i)+\ip{{\lb_{t}^0},\EE\left[ \g_t^i(\p_t^i,\x^i)\right]}\right] \nonumber\\
										&\geq \sk \left[f^i(\tilde{\x}^i)+\ip{{\lb_{t}^0},\EE\left[ \g_t^i(\tilde{\p}_t^i,\tilde{\x}^i)\right]}\right]
										\end{align}\normalsize
										where $\{\tilde{\x}^i,\{\tilde{\p}_t^i\}_{t\geq 1}\}_{i=1}^K$ is a strictly feasible (Slater) solution to \eqref{P1}. From (\textbf{A6}), such a strictly feasible solution exists and satisfies $\EE\left[ \g_t^i(\tilde{\p}_t^i,\tilde{\x}^i)\right] > C > 0$. Substituting into \eqref{appen1}, and rearranging, we obtain\vspace{0mm}
										\begin{align}\label{bound_on_dual}
										\ip{\boldsymbol{1},{\lb_{t}^0}}&\leq\frac{1}{C}\left[D({\lb_{t}^0})-\sk f^i(\tilde{\x}^i)\right] 
										\end{align}
										Since ${\lb_{t}^0} \succeq 0$, it follows from equivalence of norms $\norm{{\lb_{t}^0}} \leq \theta\norm{{\lb_{t}^0}}_1 = \theta\ip{\boldsymbol{1},{\lb_{t}^0}}$. Therefore, taking expectations in \eqref{bound_on_dual} yields
										\begin{align}
										\EE\norm{{\lb_{t}^0}} &\leq \frac{\theta}{C}\left[\Ex{D({\lb_{t}^0})}-\sk \EE f^i({\tilde{\x}^i})\right] \\
										&\leq \frac{\theta}{C}\left[{ \mathsf{D}+\ep {{\tilde{V}}/2}+ {\ep \tau \bar{V}}}-\sk \EE f^i({\tilde{\x}^i})\right]\label{case2bound}
										\end{align}\normalsize
										where the assumption for Case 2 has been used in \eqref{case2bound}. Finally, the use of triangle inequality and the bound in {\eqref{last}} yields
										\begin{align}
										\EE\norm{\colr{\lb_{t+1}^0}-\lams} &\leq \EE\norm{{\lb_{t}^0}} + \EE\norm{\colr{\lb_{t+1}^0}-{\lb_{t}^0}} + \norm{\lams}\\
										&\leq \EE\norm{{\lb_{t}^0}} + \ep VK + \norm{\lams}
										\end{align}
										which, together with \eqref{case2bound}, yields \eqref{bounded} \colr{for $t+1$}. \colr{Therefore by mathematical induction, the inequality in \eqref{bounded} holds for all $t\geq 1$.} Finally, using \eqref{bounded} and triangle inequality, we obtain the result in \eqref{main_bound} since $\colr{\lb_{t}}=\colr{\lb_{t}^0}$.
\end{IEEEproof} \vspace{-4mm}									
									\section{Proof of lemma~\ref{lemma3}} \label{lemma3proof}
									\begin{IEEEproof}
										The proof establishes a lower bound for the running average of the primal objective function, calculated at the primal iterates. The lower bound depends upon the dual optimal value, dual initialization, and the maximum delay bound $\tau$. For ease of  exposition, the proof begins with re-arranging the optimality gap in the form required by Lemma \ref{lemma3} and subsequently analyzing the resulting terms. The full proof is split into various parts that develop separate bounds on the terms $I_2$, $I_3$, and $I_4$. Since (\textbf{A7}) is required to establish Lemma \ref{lemma3}, the dual function $D$ has to be differentiable. 
										
Since the functions $f^i$ are concave, the expected value of the primal objective can be written as\vspace{0mm}
										\begin{align}
										&\Ex{\sk f^i\left(\colr{\bar{\x}^i_T}\right)}\geq \frac{1}{T}\Ex{{\sum_{t=1}^{T}\sum_{i=1}^{K}} f^i(\x^i_{t})}\label{primal1}\\
										&\!=\!\!\frac{1}{T}\!{\sum_{t=1}^{T}\!\sum_{i=1}^{K}}\EE\Big[\!f^i(\x^i_{t})\!\!+\!\!\ip{\lamb\!,\!\nabla D^i(\lamb\!)\!}\!-\!\ip{\lamb\!,\!\!\nabla D^i(\lamb)\!}\Big] \nonumber\\
										&= \!\!\frac{1}{T}{\!\sum_{t=1}^{T}\!\sum_{i=1}^{K}} \EE D^i(\lamb) \!-\! \frac{1}{T}\!{\sum_{t=1}^{T}\!\sum_{i=1}^{K}} \EE\ip{\lamb\!\!,\nabla D^i(\lamb)}\label{lemma_3}
										\end{align}
										\colr{Consider the following expression where we simply add subtract  $D^i({\lb_{t}})$ as follows 					\begin{align}
											\sk D^i(\lamb)&=\sk [D^i(\lamb)+ D^i({\lb_{t}})-D^i({\lb_{t}})]\nonumber\\
											&=\sk D^i({\lb_{t}})+\sk(D^i(\lamb)-D^i({\lb_{t}})) \nonumber\\
											&\geq \mathsf{D} + \sk(D^i(\lamb)-D^i({\lb_{t}})) \label{intrp_am_tau}
											\end{align}}
										\colr{	where \eqref{intrp_am_tau} follows since $\mathsf{D} = \sk D^i(\lams) \leq \sk D^i(\lb)$ for all $\lb \in \L$. Taking the expectation on both sides of \eqref{intrp_am_tau} and substituting the result into \eqref{lemma_3}, we obtain }
										\colr{\begin{align}
										 \Ex{\sk f^i\left(\colr{\bar{\x}^i_T}\right)}\geq
										 & \mathsf{D} - I_2 - I_3\label{primal_end}
										\end{align}}
										where $I_2$ and $I_3$ are as defined in Lemma \ref{lemma3}. The rest of the proof proceeds simply by developing bounds on $I_2$ and $I_3$. 
										\setcounter{subsubsection}{0}
\subsubsection{Bound on $I_2$} The bound on $I_2$ follows simply from the moment bounds in (\textbf{A3}) and the Cauchy-Schwartz inequality. We begin with the following observation
					\normalsize 
									 Since the functions $D^i$ are convex, it holds that
										\begin{align}
										\EE[D^i({\lb_{t}})-D^i(\lamb)] &\leq  \EE\ip{\nabla D^i({\lb_{t}}), {\lb_{t}}-\lamb}\label{incremental_ineq} \\
										&\hspace{-1cm}\leq \EE\norm{\nabla D^i({\lb_{t}})}\EE\norm{{\lb_{t}}-\lamb}\label{i3bound1} \\
										&\leq V_i\ep{(i-1)V} \label{i3bound2}
										\end{align}
										where \eqref{i3bound1} uses the Cauchy-Schwartz inequality, while \eqref{i3bound2} uses \eqref{last} and \eqref{vibound}. Therefore, substituting \eqref{i3bound2} into the expression for $I_2$ and rearranging, we obtain
										$I_2 \leq{ \ep {V}\sum_{i=1}^K(i-1)V_i}.$
										Finally, the required bound in Lemma \ref{lemma3} is obtained by substituting $V = \max_i V_i$. 

	\subsubsection{Bound on $I_3$}
The bound in $I_3$ follows from setting aside the error due to asynchrony $I_4$, and developing a bound on the remaining terms by telescopically summing the bounds on $\norm{\lam_t^i}$ over all $1\leq i\leq K$ and $1\leq t\leq T$.

Since $\mathbf{0}\in\Lambda$ is a feasible dual solution, using the form of the updates in \eqref{dual_sub} and expanding as in \eqref{non_exp}, it follows that
										\begin{align}
										\!\!\!\!\!\norm{\lambm}^2&\leq  \norm{\lamb}^2 +\norm{\ep\g_{t-\delta_i(t)}^i}^2-2\ep\ip{\lamb, \g_{t-\delta_i(t)}^i}.
										\end{align}
										Adding the term $2\ep\ip{\lb_t^{i-1},\nabla D^i(\lb_t^{i-1})}$ on both sides, and rearranging, we obtain
										\begin{align}
										2\ep\ip{\lamb,\nabla D^i(\lamb)}\leq& \norm{\lamb}^2-\norm{\lambm}^2+\norm{\ep\g_{t-\delta_i(t)}^i}^2\nonumber\\ &-2\ep\ip{\lamb,\e_{t,\delta_i(t)}^{i}}
										\end{align}
										where $\e_{t,\delta_i(t)}^i$ is as defined in \eqref{error_asyn2}. Summing over $i=1,\ldots,K$ and  $t=1,\cdots,T$, taking expectation, and utilizing (\textbf{A3}), it follows that
										\begin{align}
										I_3 &\leq \frac{\norm{\lb_1}^2}{2\ep T} + \frac{\ep }{2}\sk V_i^2 + I_4 \leq \frac{\norm{\lb_1}^2}{2\ep T} + \frac{\ep KV^2 }{2} + I_4 \label{i4bound2}
										\end{align}
										where $I_4$ is as defined in Lemma \ref{lemma3} and the \eqref{i4bound2} uses $V = \max_iV_i$. 

\subsubsection{Bound on $I_4$}
The term $I_4$ collects the error from the terms that arise due to asynchrony. A bound on $I_4$ is developed from the use of the delay bound assumption in (\textbf{A4}).	Adding and subtracting $\ip{\lb_{t}^{i-1}, \nabla D^i(\lb_{t-\tau_i(t)}^{i-1})}$ to each summand of $I_4$, we obtain
										\begin{align}\label{inbet}
										I_4=&\frac{1}{T}{\sum_{t=1}^{T}\sum_{i=1}^{K}}\Ex{\ip{\lamb,\nabla D^i(\lamb)-\nabla D^i(\lb_{t-\tau_i(t)}^{i-1})}}\nonumber\\
										&+{\sum_{t=1}^{T}\sum_{i=1}^{K}}\Ex{\ip{\lamb, \nabla D^i(\lb_{t-\tau_i(t)}^{i-1})-\g_{t-\delta_i(t)}^i}}.
										\end{align} 
										Of these, the first term {in \eqref{inbet}} can be bounded using the bound in Lemma \col{\ref{dual_B}} and the Cauchy-Schwartz inequality, by observing that\vspace{-3mm}
										\begin{align}
										&\Ex{\ip{\lamb,\nabla D^i(\lamb)-\nabla D^i(\lb_{t-\tau_i(t)}^{i-1})}} \nonumber\\
										&\leq B\EE\norm{\nabla D^i(\!\lamb\!)\!-\!\nabla D^i(\lb_{t-\tau_i(t)}^{i-1})}\! \nonumber\\&\leq \!B L^i\EE\norm{\lamb\!-\!\lb_{t-\tau_i(t)}^{i-1}} \label{primallip}\\
										&\leq B L^i \ep {\tau KV}\label{primallbbound}
										\end{align}
										where \eqref{primallip} follows from (\textbf{A7}) and \eqref{primallbbound} from the bound developed in \eqref{last}.
										
										For the second term {in \eqref{inbet}}, recalling the definition of $\F_{t-\delta_i(t)}^{i-1}$ from Appendix \ref{lemma2_proof}, observe that although $\Ex{\lb_{t}^i \mid \F_{t-\delta_i(t)}^{i-1}} \neq \lb_t^i$, there exists some $\kappa_i(t) \leq t$ such that  
										\begin{align}
										\Ex{\lb_{\kappa_i(t)}^{i-1} \mid \F_{t-\delta_i(t)}^{i-1}} = \lb_{\kappa_i(t)}^{i-1}\label{kappa}.
										\end{align}
										Indeed, observe that {$\kappa_i(t) \geq t-\delta_i(t)$ since $\lb_{t-\tau_i(t)}^{i-1}$} only depends on random variables contained in $\F_{t-\delta_i(t)}^{i-1}$. \colr{The subsequent bounds hold for any $\kappa_i(t)$ that satisfies \eqref{kappa}, including for the worst case when $\kappa_i(t) = t-\delta_i(t)$}. It follows that 										\begin{align}
										&\Ex{\ip{\nabla D^i(\lam_{t-{\tau_i(t)}}^{i-1})-\g_{t-\delta_i(t)}^i,\lb_{t}^{i-1}}} \nonumber
										\\
										& = \Ex{\Ex{\ip{\nabla D^i(\lam_{t-{\tau_i(t)}}^{i-1})-\g_{t-\delta_i(t)}^i,\lb_{t}^{i-1}}\mid\mathcal{F}_{t-\delta_i(t)}^{i-1}}}\nonumber
										\\
										& = \EE\bigg[\EE\bigg[\ip{\nabla D^i(\lam_{t-{\tau_i(t)}}^{i-1})-\g_{t-\delta_i(t)}^i,\lb_{\kappa_i(t)}^{i-1}}\mid\mathcal{F}_{t-\delta_i(t)}^{i-1}\bigg]\bigg] \nonumber
										\\
										&\ \ \ + \Ex{\ip{\nabla D^i(\lam_{t-{\tau_i(t)}}^{i-1})-\g_{t-\delta_i(t)}^i,\lb_t^{i-1} - \lb_{\kappa_i(t)}^{i-1}}} \label{i2term}.
										\end{align}
										From \eqref{gdelta} and \eqref{kappa}, it follows that the first summand in \eqref{i2term} is zero. The second summand can be bounded by using the Cauchy-Schwartz inequality and the bounds in (\textbf{A4}) and \eqref{last} as follows:
										\begin{align}
										&\Ex{\ip{\nabla D^i(\lam_{t-{\tau_i(t)}}^{i-1})-\g_{t-\delta_i(t)}^i,\lb_t^{i-1} - \lb_{\kappa_i(t)}^{i-1}}} \nonumber\\
										&\leq \Ex{\norm{\nabla D^i(\lam_{t-{\tau_i(t)}}^{i-1})-\g_{t-\delta_i(t)}^i}}\Ex{\norm{\lb_t^{i-1} - \lb_{\kappa_i(t)}^{i-1}}} \nonumber\\
										&\leq \ep {2V_i} {(t-\kappa_i(t)) {K}V} \label{i2bound1}\\
										& \leq \ep{2V_i} {\tau {K}V} \label{i2bound}
										\end{align}
										where the inequality in \eqref{i2bound} follows since $t-\tau_i(t) \leq t-\delta_i(t) \leq \kappa_i(t) \leq t$. Finally, substituting \eqref{i2bound} and \eqref{primallbbound} into \eqref{inbet} yields
										\begin{align}
										\!\!\!\!I_4 &\leq \sum_{i=1}^K { \ep\tau KV}(B L^i+{2V_i})\leq {\ep\tau K^2V(BL+2V)}
										\end{align}
										which together with \eqref{i4bound2} gives the desired bound. 
									\end{IEEEproof} 
									\vspace{0mm}									

							\vspace{0mm}
										
										\footnotesize
										\bibliographystyle{IEEEtran} 
										\bibliography{IEEEabrv,references}
										
									\end{document}